\newtheorem{theorem}{Theorem}[section]
\newtheorem{lemma}[theorem]{Lemma}
\newtheorem{proposition}[theorem]{Proposition}
\newtheorem{corollary}[theorem]{Corollary}
\newtheorem*{definition}{Definition}
\numberwithin{equation}{section}
\newcommand{\D}{{\mathbb D}}
\begin{document}

\title[]{Minimizing Neumann fundamental tones of triangles: \\
an optimal Poincar\'{e} inequality}

\author[]{R. S. Laugesen and B. A. Siudeja}
\address{Department of Mathematics, University of Illinois, Urbana,
IL 61801, U.S.A.} \email{Laugesen\@@illinois.edu}
\email{Siudeja\@@illinois.edu}
\date{\today}

\keywords{Isodiametric, isoperimetric, free membrane, Poincar\'{e} inequality.}
\subjclass[2000]{\text{Primary 35P15. Secondary 35J20}}

\begin{abstract}
The first nonzero eigenvalue of the Neumann Laplacian is shown to
be minimal for the degenerate acute isosceles triangle, among all
triangles of given diameter. Hence an optimal Poincar\'{e} inequality for triangles is derived.

The proof relies on symmetry of the Neumann fundamental mode for isosceles
triangles with aperture less than $\pi/3$. Antisymmetry is proved
for apertures greater than $\pi/3$.
\end{abstract}

\maketitle

\section{\bf Introduction}

Payne and Weinberger \cite{PW60} proved for arbitrary convex
domains that
\[
\text{$\mu_1 D^2$ is minimal for the degenerate rectangular box,}
\]
where $\mu_1$ is the first nonzero eigenvalue of the Neumann
Laplacian and $D$ is the diameter of the domain. Our main result is a stronger inequality for triangular domains in the plane:
\[
\text{$\mu_1 D^2$ is minimal for the degenerate acute isosceles
triangle.}
\]
We prove our result by first stretching to an isosceles triangle and then
bisecting and stretching repeatedly to approach the degenerate
case. Payne and Weinberger's method of thinly slicing an arbitrary
domain does not apply, since the slices would not be triangular.

A corollary is an optimal Poincar\'{e} inequality for triangles, namely that
\[
\int_T v^2 \, dA < \frac{D^2}{j_{1,1}^2} \int_T |\nabla v|^2 \, dA
\]
whenever the function $v$ has mean value zero over the triangle
$T$. Here $j_{1,1} \simeq 3.8317$ denotes the first positive root of the Bessel function $J_1$.

Our proof relies on symmetry properties of isosceles triangles.
We show by our ``Method of the Unknown Trial Function''
(Section~\ref{isec3}) that the first nonconstant Neumann mode of
an isosceles triangle is symmetric when the aperture of the
triangle is less than $\pi/3$. We similarly prove antisymmetry
when the aperture exceeds $\pi/3$. In that case the nodal curve
lies on the shortest altitude. 

Our companion paper \cite{LS09a} \emph{maximizes} $\mu_1$ among
triangles, under perimeter or area normalization, with the minimizer being
equilateral. We know of no other papers in the literature that
study sharp isoperimetric type inequalities for Neumann
eigenvalues of triangles. Note the Neumann eigenfunctions of triangles were investigated for the ``hot spots'' conjecture, by Ba\~{n}uelos and Burdzy \cite{BaBu}, and the approximate location of the nodal curve for non-isosceles triangles was studied in recent work of Atar and Burdzy \cite{AtBu}, using probabilistic methods. 

Dirichlet eigenvalues of triangles have received considerable
attention \cite{AF06,F06,fresiu,S07,S09}. Particularly interesting is the Dirichlet gap conjecture for triangles, due to Antunes and Freitas \cite{AF08}, which claims $(\lambda_2 - \lambda_1)D^2$ is minimal for the equilateral triangle; some progress has been made recently by Lu and Rowlett \cite{LR08}. Dirichlet eigenvalues of degenerate domains have also
been investigated lately \cite{BF09,F07}. Some of these Dirichlet triangle results are discussed in our companion paper \cite[Section~10]{LS09a}.

For broad surveys of isoperimetric eigenvalue
inequalities, see the paper by Ashbaugh \cite{A99}, and the
monographs of Bandle \cite{B79}, Henrot \cite{He06}, Kesavan
\cite{K06} and P\'{o}lya--Szeg\H{o} \cite{PS51}.

\section{\bf Notation}
\label{notation}

The Neumann eigenfunctions of the Laplacian on a bounded plane
domain $\Omega$ with Lipschitz boundary satisfy $-\Delta u = \mu u$ with
natural boundary condition $\partial u / \partial n = 0$. The
eigenvalues $\mu_j$ are nonnegative, with
\[
0 = \mu_0 < \mu_1 \leq \mu_2 \leq \dots \to \infty .
\]
Call $\mu_1$ the \textbf{fundamental tone}, since
$\sqrt{\mu_1}$ is proportional to the lowest frequency of vibration
of a free membrane over the domain. Call the eigenfunction $u_1$ a
\textbf{fundamental mode}.

The Rayleigh Principle says
\[
\mu_1 = \min_{\int_\Omega v \, dA = 0} R[v]
\]
where 
\[
R[v] = \frac{\int_\Omega |\nabla v|^2 \, dA}{\int_\Omega v^2 \, dA}
\]
is the Rayleigh quotient of $v \in H^1(\Omega)$. Sometimes we write $R_\Omega[v]$ to emphasize the domain over which we take the Rayleigh quotient.

For a triangular domain with side lengths $l_1 \ge l_2 \ge l_3 > 0$, we denote:
\begin{itemize}
  \item[] $D=l_1 =$ diameter,
  \item[] $L=l_1 + l_2 + l_3 =$ perimeter,
  \item[] $A=$ area.
  \end{itemize}
Write $j_{0,1} \simeq 2.4048$ and $j_{1,1} \simeq 3.8317$ for the first positive roots of the Bessel functions $J_0$ and $J_1$, respectively.

\section{\bf Results}

First we develop symmetry and antisymmetry properties of
the fundamental mode of an isosceles triangle. 
\begin{definition}\rm
The \textbf{aperture} of an isosceles triangle is the angle between
its two equal sides. Call a triangle \textbf{subequilateral} if it is isosceles with
aperture less than $\pi/3$, and \textbf{superequilateral} if it is
isosceles with aperture greater than $\pi/3$.
\end{definition}
\begin{theorem} \label{sharpsymmetric} Every fundamental mode of a subequilateral triangle is symmetric
with respect to the line of symmetry of the triangle.
\end{theorem}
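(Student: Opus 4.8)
The natural approach is to argue by contradiction: suppose some fundamental mode $u$ of a subequilateral triangle $T$ is *not* symmetric across the line of symmetry $\ell$. Since the triangle has a reflection symmetry $\sigma$, the eigenspace for $\mu_1$ is invariant under $\sigma$, so it splits into a symmetric part and an antisymmetric part; if $u$ is not symmetric, we may extract a genuinely antisymmetric eigenfunction $w$ with $w\circ\sigma = -w$, and $R_T[w]=\mu_1$. An antisymmetric function must vanish along $\ell$, so $w$ restricted to one of the two congruent right-triangle halves $T'$ (obtained by cutting along $\ell$) is a Dirichlet eigenfunction on the side lying on $\ell$ and Neumann on the other two sides, with eigenvalue $\mu_1$. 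The strategy is then to show this ``mixed'' eigenvalue is strictly larger than $\mu_1(T)$ when the aperture is less than $\pi/3$ — in other words, to exhibit a symmetric trial function on $T$ with Rayleigh quotient strictly below that mixed eigenvalue, contradicting minimality.

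**Carrying it out.** The ``Method of the Unknown Trial Function'' suggested in the introduction is presumably this: rather than guessing an explicit symmetric competitor, one takes the *unknown* antisymmetric ground state $w$ on the half-triangle $T'$ and builds from it a symmetric trial function on $T$ by an appropriate folding/reflection or by composing with a geometric map. Concretely, I would look for a measure-preserving or Rayleigh-quotient-decreasing transformation that sends the half-triangle $T'$ (Dirichlet on the symmetry edge) to a configuration living inside $T$ as a symmetric function, using the constraint that the aperture is $<\pi/3$ — equivalently that the symmetry edge (an altitude) is *longer* than half the base, so the half-triangle is ``tall and thin.'' One compares $\int |\nabla|^2$ and $\int (\cdot)^2$ under this map; the subequilateral hypothesis should make the relevant Jacobian/stretching factor work in the favorable direction. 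The zero-mean condition is automatic for the antisymmetric $w$ and must be checked (or arranged by subtracting a constant) for the resulting symmetric trial function.

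**The main obstacle.** The crux is producing the right transformation and verifying the strict inequality of Rayleigh quotients, i.e. showing the mixed Dirichlet–Neumann fundamental tone of the ``tall thin'' half-triangle strictly exceeds $\mu_1(T)$ precisely in the subequilateral regime. This is where the angle condition $<\pi/3$ must enter essentially, since at aperture exactly $\pi/3$ (equilateral) symmetry and antisymmetry are both present at the bottom of the spectrum, and for aperture $>\pi/3$ the conclusion reverses (the companion antisymmetry theorem). I expect the proof to handle this by a stretching argument: one linearly stretches $T$ (and $w$) in the direction along the base, monitoring how $R[w]$ changes, and shows that stretching toward the degenerate case decreases the antisymmetric Rayleigh quotient faster than the symmetric one — or alternatively, one uses a known monotonicity of $\mu_1$ under such stretching together with an explicit comparison at a degenerate or equilateral endpoint. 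A subtlety to watch is the possibility of a nonsimple $\mu_1$: the argument must allow for the eigenspace to be multidimensional and still conclude that *every* fundamental mode is symmetric, which means ruling out any antisymmetric element, not merely showing the symmetric one has strictly smaller quotient on average.
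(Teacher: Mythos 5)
Your reduction matches the paper's starting point: decompose any fundamental mode into symmetric and antisymmetric parts across the symmetry line, so that it suffices to show that the smallest eigenvalue $\mu_a$ possessing an antisymmetric eigenfunction strictly exceeds $\mu_1$ whenever the aperture is less than $\pi/3$ (and you are right that this formulation handles possible multiplicity of $\mu_1$ correctly). But the proposal stops exactly where the real work begins: you never exhibit the transformation or the comparison that yields $\mu_a>\mu_1$. Phrases such as ``I would look for a measure-preserving or Rayleigh-quotient-decreasing transformation'' and ``I expect the proof to handle this by a stretching argument'' describe a search strategy, not an argument, and nothing in the proposal identifies a computable benchmark that separates the two quantities. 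Moreover, the fallback you suggest --- monotonicity of the relevant eigenvalues under stretching toward the degenerate or equilateral endpoint --- is not available off the shelf: the authors explicitly remark (in the proof of Theorem~\ref{th:tld2}) that they were unable to prove even the monotonicity of $\mu_1(\alpha)$ in the aperture, and they work around it by other means.

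What the paper actually does is a two-sided comparison with the equilateral value $16\pi^2/9$ (equal sides normalized to length $1$, so $D=1$). First, $\mu_1(\alpha)<16\pi^2/9$ for all $\alpha<\pi/3$: for $\alpha$ near $\pi/3$ one transplants the \emph{symmetric} equilateral eigenfunction $u_2$ by the diagonal linear map $\tau\colon T(\alpha)\to T(\pi/3)$ and applies Corollary~\ref{corcomp}(i), using the explicit integrals $\int_E u_{2,x}^2\,dA\big/\int_E u_{2,y}^2\,dA=(32\pi^2-243)/(32\pi^2+243)\simeq 0.130$; for small $\alpha$ one instead uses the sector-transplantation upper bound $\mu_1(\alpha)\le j_{1,1}^2/\cos^2(\alpha/2)$ of Lemma~\ref{boundsiso}; the two ranges overlap because $0.130<0.195$. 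Second, $\mu_a(\beta)>16\pi^2/9$ for all $\beta<\pi/3$, by the ``Method of the Unknown Trial Function'' with a dichotomy on $\kappa=\int v_y^2\big/\int v_x^2$ for the unknown antisymmetric mode $v$: if $\kappa>1/3$, transplant $v$ to the equilateral triangle via $\tau$ and apply Corollary~\ref{corcomp}(ii); if $\kappa\le 1/3$, slice $T(\beta)$ by vertical segments, note that antisymmetry gives each slice mean value zero, and apply the one-dimensional Neumann Poincar\'{e} inequality on each slice to get $\mu_a(\beta)\ge\pi^2/\sin^2(\beta/2)>4\pi^2$. None of these ingredients --- the equilateral benchmark, the two explicit transplantations, the dichotomy on $\kappa$, or the slicing estimate --- appears in your write-up, so the core inequality $\mu_a>\mu_1$ in the subequilateral range is asserted rather than established; that is a genuine gap, not a stylistic omission.
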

See Figure~\ref{symmfig}(a), where the nodal curve is sketched. The theorem is plausible because the main variation of a fundamental mode should take place in the ``long'' direction of the triangle.

We will use this symmetry result when proving the lower bound on the fundamental tone, in Theorem~\ref{th:tld2} below. 

Next we state an antisymmetry result for superequilateral triangles.
\begin{theorem} \label{bluntantisymmetric} Every fundamental mode of a superequilateral triangle is
antisymmetric with respect to the line of symmetry of the triangle.
\end{theorem}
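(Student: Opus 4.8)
The plan is to mimic the "Method of the Unknown Trial Function" that (presumably) establishes Theorem~\ref{sharpsymmetric}, but now arranged to produce antisymmetry rather than symmetry. Let $T$ be superequilateral, with line of symmetry $\ell$ splitting $T$ into two congruent right triangles $T_+$ and $T_-$. Given a fundamental mode $u$ with $R_T[u]=\mu_1(T)$, decompose it into its symmetric and antisymmetric parts $u = u^s + u^a$ relative to reflection across $\ell$. Both parts have mean value zero over $T$ (the symmetric part because $u$ does, the antisymmetric part automatically), so both are admissible in the Rayleigh principle unless they vanish. Since $R_T[u] = \mu_1(T)$ is a weighted average of $R_T[u^s]$ and $R_T[u^a]$ (the cross terms in numerator and denominator integrate to zero by parity), each nonzero part is itself a fundamental mode. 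Thus it suffices to show that the symmetric part must be constant, hence zero; equivalently, that the lowest eigenvalue of the mixed Steklov-type problem on the half-triangle $T_+$ with Neumann data on the two non-symmetry sides and \emph{Neumann} data on $\ell$ — call it $\mu_1^N(T_+)$ — strictly exceeds $\mu_1(T)$, while the corresponding problem with \emph{Dirichlet} data on $\ell$ gives exactly $\mu_1(T)$ when the aperture exceeds $\pi/3$.

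Concretely, I would reduce to two half-triangle eigenvalue problems on the right triangle $T_+$: the Neumann--Neumann problem $\mathrm{NN}(T_+)$ (Neumann on all three sides, whose first nonzero eigenvalue equals $R_T[u^s]$ for the best symmetric $u^s$) and the Neumann--Dirichlet problem $\mathrm{ND}(T_+)$ (Dirichlet on $\ell$, Neumann on the legs, whose first eigenvalue equals $R_T[u^a]$ for the best antisymmetric $u^a$). The theorem is equivalent to the strict inequality
\[
\mu_1\big(\mathrm{ND}(T_+)\big) < \mu_1\big(\mathrm{NN}(T_+)\big)
\]
for right triangles arising as halves of superequilateral triangles, i.e.\ right triangles whose angle at the apex (half the aperture) exceeds $\pi/6$. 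To prove this I would use a stretching/domain-monotonicity argument together with an explicit trial function: bound $\mu_1(\mathrm{ND}(T_+))$ from above by a linear-in-the-long-direction trial function (vanishing on $\ell$, which is the short altitude when the aperture exceeds $\pi/3$), obtaining something like $\mu_1(\mathrm{ND}(T_+)) \le c/(\text{base})^2$; and bound $\mu_1(\mathrm{NN}(T_+))$ from below by comparison with a rectangle or by the Payne--Weinberger-type estimate on the half-triangle. The geometric hypothesis aperture $> \pi/3$ is exactly what makes $\ell$ the short altitude, so that the antisymmetric competitor sees the long direction and wins.

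The main obstacle I expect is the strict separation of the two eigenvalues uniformly over the relevant family of right triangles — near the equilateral case (aperture close to $\pi/3$) the symmetric and antisymmetric competitors nearly tie, so the crude trial-function bounds may not separate them, and a degeneration analysis as the aperture $\downarrow \pi/3$ will be needed (matching the borderline equilateral case, where by symmetry of the equilateral triangle the first nonzero eigenvalue has multiplicity two with both a symmetric and an antisymmetric mode). I would handle the near-equilateral regime either by a perturbative computation of the eigenvalue splitting, or — more in the spirit of the "Unknown Trial Function" method — by a direct variational argument on $T_+$ that tracks the sign of $u^s$ on $\ell$ and uses the fact that a nonconstant symmetric fundamental mode would have to be monotone along $\ell$ with a single nodal point, forcing, via the eigenvalue equation and the Neumann condition on $\ell$, a contradiction with $R_T[u^s] = \mu_1(T)$ when $\ell$ is short. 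The opposite case (aperture $<\pi/3$, giving Theorem~\ref{sharpsymmetric}) is the mirror image: then $\ell$ is the long altitude and the symmetric competitor wins, so the two theorems should come out of one unified half-triangle comparison with the aperture $=\pi/3$ case as the exchange point.
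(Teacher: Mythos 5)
Your structural reduction is exactly the one the paper uses: split a fundamental mode into symmetric and antisymmetric parts, observe each nonzero part is again a fundamental mode, and reduce the theorem to the strict inequality $\mu_1<\mu_s$, equivalently (in your notation) $\mu_1\big(\mathrm{ND}(T_+)\big)<\mu_1\big(\mathrm{NN}(T_+)\big)$ on the half right triangle. The genuine gap is that the quantitative separation, which is the entire content of the theorem, is not carried out, and the specific tools you propose demonstrably fail where the problem is hardest. A trial function vanishing on $\ell$ and linear in the long direction is (up to scale) just $y$; on the near-equilateral triangle of side $1$ its Rayleigh quotient is about $24$, while $\mu_s\to 16\pi^2/9\simeq 17.5$ as the aperture decreases to $\pi/3$, so this upper bound for the antisymmetric eigenvalue sits \emph{above} the symmetric eigenvalue and separates nothing near the borderline. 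On the other side, Neumann eigenvalues are not monotone under domain inclusion, so ``comparison with a rectangle'' gives no lower bound for $\mathrm{NN}(T_+)$ without further argument, and the Payne--Weinberger bound $\mu_1(T_+)>\pi^2$ (the half triangle has diameter $1$) beats Cheng's upper bound $j_{0,1}^2/\sin^2(\beta/2)$ for the full triangle only when $\sin^2(\beta/2)>j_{0,1}^2/\pi^2\simeq 0.59$, i.e.\ roughly $\beta\gtrsim 0.56\,\pi$: even a sizable range of obtuse apertures is left uncovered, let alone $(\pi/3,\pi/2)$. You correctly anticipate the near-equilateral difficulty (the eigenvalue at $\beta=\pi/3$ is double, with one symmetric and one antisymmetric mode), but the proposed remedies --- a perturbative splitting or tracking the nodal behavior of $u^s$ on $\ell$ --- are not developed, and a perturbative argument would in any case only handle an unspecified neighborhood of $\pi/3$, not the whole interval.

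For comparison, the paper closes the argument in two regimes. When $\sin^2(\beta/2)\geq j_{0,1}^2/j_{1,1}^2\simeq 0.39$ it combines Cheng's bound (proved for these triangles in Lemma~\ref{chengsupereq}) with the estimate $\mu_s(\beta)\geq\mu_1\big(U(\beta)\big)>j_{1,1}^2$ on the half triangle, where the last inequality is Theorem~\ref{th:tld2} itself --- i.e.\ the sharp constant $j_{1,1}^2$, not $\pi^2$, is needed to reach down to $\beta$ below $\pi/2$, and the authors are careful that no circularity results. For $\pi/3<\beta<\pi/2$ it uses the upper bound $\mu_1<16\pi^2/(3S^2)$ from the companion paper, and obtains the matching lower bound on $\mu_s(\beta)$ by the Method of the Unknown Trial Function: the unknown symmetric mode $w$ is transplanted by the diagonal linear map (Lemma~\ref{lemcomp}) either to the equilateral triangle (when the energy ratio $\kappa<1/2$) or to the right isosceles triangle (when $\kappa\geq 1/2$), whose lowest symmetric eigenvalues are known exactly. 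A successful version of your plan therefore needs comparably sharp inputs at both ends --- or a genuinely new argument on $(\pi/3,\pi/2)$ --- and the proposal as written does not contain one.
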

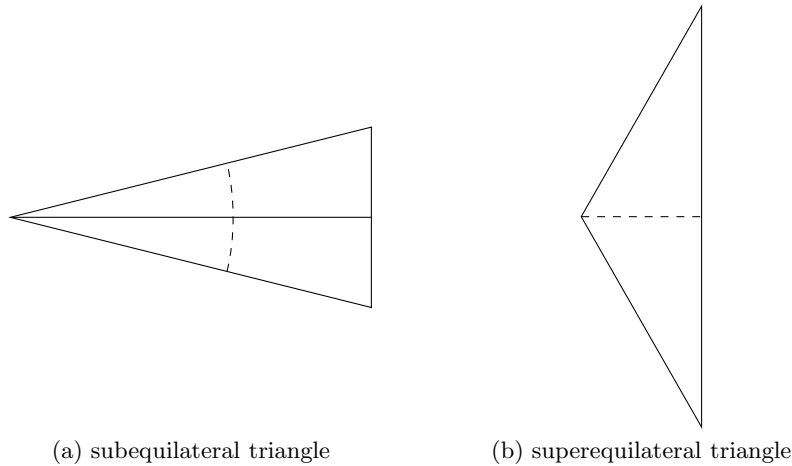
\begin{figure}[t]
  \begin{center}
    \hspace*{\fill}
    \subfloat[subequilateral triangle]{
\begin{tikzpicture}[scale=0.4]
      \path (0,-7) -- (12,7);
      \draw (0,0) -- (12,-3) -- (12,3) -- cycle;
      \clip (0,0) -- (12,-3) -- (12,3) -- cycle;
      \draw (0,0) -- (12,0);
      \draw[dashed] (7.2,-3*7.2/12) .. controls +(3/12*1.1,1.1) and +(3/12*1.1,-1.1) .. (7.2,3*7.2/12);
    \end{tikzpicture}
    }
    \hspace*{\fill}
    \subfloat[superequilateral triangle]{
    \begin{tikzpicture}[scale=0.4]
      \path (0,-7) -- (12,7);
      \draw (4,0) -- (8,7) -- (8,-7) -- cycle;
      \draw[dashed] (4,0) -- (8,0);
    \end{tikzpicture}
    }
    \hspace*{\fill}
  \end{center}
  \caption{Nodal curves (dashed) for the fundamental mode of an isosceles triangle. The fundamental mode satisfies a Neumann condition on each solid line, and a Dirichlet condition on each dashed curve.} \label{symmfig}
\end{figure}

\medskip Now we develop lower bounds on the fundamental tone, under diameter normalization.
The sharp lower bound of Payne and Weinberger \cite{PW60} says that for
convex domains in all dimensions,
\begin{equation} \label{PW}
  \mu_1 D^2>\pi^2.
\end{equation}
The bound is asymptotically correct for rectangular boxes that
degenerate to an interval. 

For triangles we will prove a better lower bound:
\begin{theorem} \label{th:tld2}
For all triangles,
\[
    \mu_1 D^2 > j_{1,1}^2
\]
with equality asymptotically for degenerate acute isosceles
triangles.
\end{theorem}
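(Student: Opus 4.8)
The plan is to reduce the general triangle to the degenerate acute isosceles case by a sequence of shape deformations, each of which does not increase $\mu_1 D^2$, and then to compute the limiting value explicitly.

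First I would reduce to isosceles triangles. Given an arbitrary triangle $T$ with longest side $l_1 = D$, I want to stretch $T$ in a direction transverse to the longest side so as to make it isosceles while controlling both $\mu_1$ and $D$. The key analytic tool is a linear transformation (a shear followed by a scaling, or a direct affine map) applied to $T$; under such a map one tracks how the Rayleigh quotient $R[v]$ transforms, using the fundamental mode of the target triangle as a trial function pulled back to $T$. The point is to arrange that the diameter is preserved (still equal to the image of the longest side) while $\mu_1$ decreases, or to argue the reverse direction — start from isosceles and show a general triangle has larger $\mu_1 D^2$. Concretely I expect: among all triangles with a given longest side $D$ and given, say, second side $l_2$, the one minimizing $\mu_1 D^2$ is isosceles with those two sides equal, proved by a symmetrization/stretching argument in which Theorem \ref{sharpsymmetric} guarantees the relevant mode is symmetric (so it survives the folding).

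Next, on the family of isosceles triangles, I would run a bisection-and-stretch iteration. Given a subequilateral triangle, Theorem \ref{sharpsymmetric} tells us the fundamental mode is symmetric about the axis; hence dropping a perpendicular from the apex bisects the triangle into two congruent right triangles, on each of which the fundamental mode of the whole restricts to a Neumann eigenfunction (Neumann on the two original sides, Neumann also on the bisector, by symmetry) with the same eigenvalue $\mu_1$. I then stretch each half back up to an isosceles triangle of the same diameter $D$; the stretching step should be shown not to increase $\mu_1 D^2$ (again via a trial-function computation under the affine map, using that the mode is symmetric so it extends). Iterating, the aperture of the isosceles triangle tends to $0$, i.e. the triangle degenerates to a segment of length $D$ traversed with multiplicity, and the whole time $\mu_1 D^2$ has not increased. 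In the degenerate limit the eigenvalue problem becomes the one-dimensional weighted problem $-(x u')' = \mu x u$ on $[0,D]$ (the weight $x$ coming from the linearly growing width of a thin triangle near its apex), whose first nonzero eigenvalue is $j_{1,1}^2/D^2$ — this is exactly where the Bessel root $j_{1,1}$ enters, since the equation is Bessel's equation of order $0$ after substitution, with the Neumann-type condition at $x=D$ forcing $J_0'(\sqrt{\mu}\,D)=0$, i.e. $J_1(\sqrt{\mu}\,D)=0$. Strictness of the inequality $\mu_1 D^2 > j_{1,1}^2$ then follows because no actual triangle is degenerate, so at least one step in the argument is strict.

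The main obstacle I anticipate is the monotonicity of $\mu_1 D^2$ under the stretching step: one must verify that stretching an isosceles (or right) triangle transverse to its longest side, in the direction that decreases the aperture, does not increase $\mu_1 D^2$. This is delicate because stretching changes $\mu_1$ and $D$ simultaneously and in competing ways, and the naive trial-function estimate only controls the numerator of the Rayleigh quotient cleanly when the mode has the right symmetry — which is precisely why Theorem \ref{sharpsymmetric} is indispensable, and why the argument is confined to the subequilateral range. A secondary technical point is justifying the convergence of eigenvalues as the triangles degenerate (a domain-collapse limit), for which one uses explicit trial functions built from the Bessel solution of the limiting ODE together with a matching lower bound, rather than abstract spectral continuity.
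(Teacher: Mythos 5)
Your plan is essentially the paper's proof: stretch perpendicular to the longest side to reduce to a subequilateral triangle of the same diameter (Proposition~\ref{pr:sharp}), then use the symmetry from Theorem~\ref{sharpsymmetric} to bisect along the axis of symmetry, stretch the half back to a subequilateral triangle of the same diameter, and iterate so the aperture tends to zero, with strictness supplied by the stretching step. The matching two-sided bounds you flag as needed in the degenerate limit are exactly the paper's Lemma~\ref{boundsiso}, obtained by transplantation between the triangle and a circular sector whose radial mode $J_0(j_{1,1}r/l)$ is precisely the one-dimensional Bessel problem you describe.
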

The theorem improves considerably (for triangles) on Payne and
Weinberger's inequality, because $j_{1,1}^2 \simeq 14.7$ is
greater than $\pi^2 \simeq 9.9$.

\begin{corollary}[Optimal Poincar\'{e} inequality for triangular
domains] For all triangles $T$, one has
\[
\frac{\int_T |\nabla v|^2 \, dA}{\int_T v^2 \, dA} >
\frac{j_{1,1}^2}{D^2}
\]
whenever $v \in H^1(T)$ has mean value zero.
\end{corollary}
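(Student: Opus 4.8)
The plan is to deduce this corollary directly from Theorem~\ref{th:tld2} by means of the Rayleigh Principle; no further analysis is needed.

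First I would recall the Rayleigh Principle from Section~\ref{notation}: for every $v \in H^1(T)$ satisfying $\int_T v \, dA = 0$ and $v \not\equiv 0$, one has $R_T[v] \ge \mu_1$, since $\mu_1$ is the minimum of the Rayleigh quotient over exactly this class of trial functions. Next I would invoke Theorem~\ref{th:tld2}, which asserts $\mu_1 D^2 > j_{1,1}^2$, that is, $\mu_1 > j_{1,1}^2 / D^2$. Chaining the two facts gives
\[
\frac{\int_T |\nabla v|^2 \, dA}{\int_T v^2 \, dA} = R_T[v] \ge \mu_1 > \frac{j_{1,1}^2}{D^2},
\]
which is precisely the claimed inequality. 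The strictness is worth a word: although the Rayleigh bound $R_T[v] \ge \mu_1$ is attained (when $v$ is a fundamental mode), the inequality $\mu_1 > j_{1,1}^2/D^2$ from Theorem~\ref{th:tld2} is strict, so the chained inequality remains strict for \emph{every} admissible $v$.

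Finally, to justify the adjective ``optimal'' in the statement, I would note that the constant $j_{1,1}^2$ cannot be enlarged: by the asymptotic-equality clause of Theorem~\ref{th:tld2}, choosing $T$ in a family of acute isosceles triangles degenerating to a segment and taking $v$ to be a fundamental mode $u_1$ of $T$ yields $D^2 R_T[u_1] = \mu_1 D^2 \to j_{1,1}^2$, so no constant strictly larger than $j_{1,1}^2$ can work uniformly over all triangles. In short, the entire substance of the corollary is carried by Theorem~\ref{th:tld2}, and the only ``obstacle'' is that theorem itself; the passage from it to the Poincar\'{e} inequality is immediate.
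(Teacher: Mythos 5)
Your proof is correct and matches the paper's own argument, which simply notes that the corollary follows immediately from Theorem~\ref{th:tld2} via the Rayleigh characterization of the fundamental tone. Your additional remark on the sharpness of the constant via degenerating acute isosceles triangles is consistent with the asymptotic-equality clause of that theorem.
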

The corollary follows immediately, by the Rayleigh characterization of the fundamental tone.

\subsubsection*{Remarks on the literature.} Payne and Weinberger's inequality \eqref{PW} has been generalized to geodesically convex domains on surfaces with nonnegative Gaussian curvature by Chavel and Feldman \cite{CF77}, who adapted Payne and Weinberger's idea of slicing the domain into thin strips. A different approach is to employ a ``$P$-function'' and the maximum principle \cite[Theorem 8.13]{S81}. This approach extends to manifolds of any dimension. It yields only $\mu_1 D^2 \geq \pi^2/4$, but the unwanted factor of $4$ disappears when the fundamental mode is known to have maximum and minimum values of opposite sign and equal magnitude. We do not know whether our inequality for triangles in Theorem~\ref{th:tld2} can be proved by a $P$-function method. 

\medskip Next we deduce lower bounds in terms of perimeter $L$. Since $L>2D$, the Payne--Weinberger lower bound \eqref{PW} implies for all convex, bounded plane domains that
\begin{equation} \label{eq:pwl}
  \mu_1 L^2>4\pi^2 ,
\end{equation}
with equality holding asymptotically for rectangles that degenerate to a segment. We
deduce a stronger inequality for triangles from
Theorem~\ref{th:tld2}.
\begin{corollary} \label{co:tll}
For all triangles,
\[
    \mu_1 L^2>4j_{1,1}^2
\]
with equality asymptotically for degenerate acute isosceles
triangles.
\end{corollary}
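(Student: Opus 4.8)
The plan is to deduce this directly from Theorem~\ref{th:tld2} together with the triangle inequality, so that the corollary amounts to a rescaling of the diameter bound. For any triangle with side lengths $l_1 \ge l_2 \ge l_3 > 0$, the triangle inequality gives $l_2 + l_3 > l_1$, and hence the perimeter satisfies
\[
L = l_1 + l_2 + l_3 > 2 l_1 = 2D .
\]
Therefore $L^2 > 4 D^2$, and multiplying by $\mu_1 > 0$ and invoking Theorem~\ref{th:tld2} yields
\[
\mu_1 L^2 > 4 \mu_1 D^2 > 4 j_{1,1}^2 ,
\]
which is the claimed inequality.

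For the asymptotic sharpness I would use the same family of degenerate triangles that makes Theorem~\ref{th:tld2} sharp, namely acute isosceles triangles whose aperture tends to $0$. For such a triangle the two equal sides each have length $D$ while the base $l_3$ tends to $0$, so that $L = 2D + l_3$ and consequently $L/D \to 2$. Writing $\mu_1 L^2 = (\mu_1 D^2)(L/D)^2$ and using the asymptotic equality $\mu_1 D^2 \to j_{1,1}^2$ along this family, asserted in Theorem~\ref{th:tld2}, we obtain $\mu_1 L^2 \to 4 j_{1,1}^2$.

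There is no substantial obstacle: the entire analytic content lies in Theorem~\ref{th:tld2}, and this corollary is merely a consequence of the elementary inequality $L > 2D$. The only point needing any care is checking that the extremal family for the diameter normalization simultaneously drives $L/D$ to its infimal value $2$, which holds precisely because the degenerating triangles are isosceles with a vanishing base.
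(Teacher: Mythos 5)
Your proposal is correct and matches the paper's own (implicit) argument: the text preceding the corollary deduces it from $L>2D$ together with Theorem~\ref{th:tld2}, with asymptotic equality along the same degenerating acute isosceles family where $L/D\to 2$ and $\mu_1 D^2\to j_{1,1}^2$. Nothing further is needed.
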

The constant $4j_{1,1}^2 \simeq 58.7$ for triangles exceeds the
value $4\pi^2 \simeq 39.5$ for general convex domains in \eqref{eq:pwl}.

Incidentally, the area cannot provide a lower bound on the Neumann
fundamental tone, because for a sequence of triangles degenerating
to a line segment, one finds $\mu_1$ is bounded while the area $A$
approaches zero; thus $\mu_1 A$ can be arbitrarily close to $0$.

\medskip Lastly we examine \textit{upper} bounds on the fundamental tone in terms of diameter. (For upper bounds in terms of area and perimeter, see our paper \cite{LS09a}.) Cheng \cite[Theorem~2.1]{cheng} gave an upper bound for general convex domains
that complements Payne and Weinberger's lower bound; it says in two
dimensions that
\begin{equation} \label{Cheng}
  \mu_1 D^2 < 4j_{0,1}^2 \simeq 23.1.
\end{equation}
A slightly more general result was proved by Ba\~nuelos and Burdzy
\cite[Proposition 2.2]{BaBu} using probabilistic methods. See also
the non-sharp inequality proved using different methods by Smits
\cite[Theorem~4]{S96}.

Our contribution is to obtain a complementary lower bound for all
isosceles triangles of aperture greater than $\pi/3$.
\begin{proposition}\label{1D}
For superequilateral triangles of aperture $\alpha \in (\pi/3,\pi)$, one has
\[
    4j_{0,1}^2 \sin^2(\alpha/2) \leq \mu_1 D^2< 4j_{0,1}^2.
\]
\end{proposition}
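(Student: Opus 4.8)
The upper bound is immediate: it is precisely Cheng's inequality \eqref{Cheng}, which holds for every bounded convex plane domain, hence for triangles. So the real work is the lower bound $\mu_1 D^2 \ge 4 j_{0,1}^2\sin^2(\alpha/2)$. Write $s$ for the common length of the two equal sides, so that the base has length $D = 2s\sin(\alpha/2)$; then the lower bound is equivalent to $\mu_1 \ge j_{0,1}^2/s^2$.

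The plan is to exploit the antisymmetry in Theorem~\ref{bluntantisymmetric}. Reflecting across the line of symmetry (the altitude from the apex $O$ to the base midpoint $F$), the fundamental mode $u_1$ vanishes on the segment $OF$, and its restriction $u$ to the half-triangle $T^+$ with vertices $O$, $F$ and one base vertex $B$ is the first eigenfunction of the mixed problem on $T^+$ --- Dirichlet on the altitude $OF$, Neumann on the two remaining sides $OB$ and $FB$ --- with eigenvalue $\mu_1$. Being a first eigenfunction of this problem, $u$ may be taken with $u\ge 0$ on $T^+$, $u>0$ in the interior and outward normal derivative $\partial u/\partial\nu\le 0$ on the Dirichlet side $OF$. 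The two Neumann sides $OB$ and $FB$ share the vertex $B$, and the longest side of $T^+$ is the equal side $OB$, of length $s$; hence every point of $T^+$ lies within distance $s$ of $B$, with equality only at $O$.

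Now compare $u$ with the test function $\phi(x) = J_0\!\bigl(j_{0,1}|x-B|/s\bigr)$. Bessel's equation gives $-\Delta\phi = (j_{0,1}^2/s^2)\phi$, and since $J_0$ is positive on $[0,j_{0,1})$ and vanishes at $j_{0,1}$, we get $\phi\ge 0$ on $T^+$ with $\phi>0$ in the interior. Crucially, $\phi$ is radial about $B$, so $\nabla\phi$ is parallel to $x-B$; on each of the sides $OB$, $FB$ emanating from $B$ the outward normal is perpendicular to $x-B$, whence $\partial\phi/\partial\nu = 0$ there. Green's identity on $T^+$ therefore yields
\[
\Bigl(\tfrac{j_{0,1}^2}{s^2}-\mu_1\Bigr)\int_{T^+} u\phi\,dA
= \int_{\partial T^+}\!\Bigl(\phi\,\tfrac{\partial u}{\partial\nu} - u\,\tfrac{\partial\phi}{\partial\nu}\Bigr)ds
= \int_{OF}\phi\,\tfrac{\partial u}{\partial\nu}\,ds \le 0,
\]
using $u=0$ on $OF$, $\partial u/\partial\nu=0$ on $OB\cup FB$, $\partial\phi/\partial\nu=0$ on $OB\cup FB$, and $\phi\ge 0$, $\partial u/\partial\nu\le 0$ on $OF$. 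Since $\int_{T^+}u\phi\,dA>0$, this forces $\mu_1\ge j_{0,1}^2/s^2$, and multiplying by $D^2=4s^2\sin^2(\alpha/2)$ completes the lower bound.

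The crux is the choice of $\phi$: centering the profile $J_0$ at the base vertex $B$ at exactly the radius $s$ equal to the long side of $T^+$, so that simultaneously $\phi\ge 0$ on $T^+$ and $\partial\phi/\partial\nu$ vanishes on precisely the two Neumann sides. Equally essential is the reduction through Theorem~\ref{bluntantisymmetric}: the analogous comparison function on the whole triangle would change sign because the diameter $D$ exceeds $s$, and there would be no boundary piece on which the $u\,\partial\phi/\partial\nu$ term could be killed. The remaining point, routine but needing attention, is to justify Green's identity on the polygonal domain $T^+$ with mixed boundary data and the limited regularity of $u$ near the corner $O$ (an approximation argument on slightly shrunken subdomains suffices), and to record that the first eigenfunction of the mixed problem is simple and of one sign, so that $\int_{T^+}u\phi\,dA\neq 0$.
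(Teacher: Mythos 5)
Your argument is correct, and although it begins exactly as the paper does --- antisymmetry (Theorem~\ref{bluntantisymmetric}) reduces the lower bound to a mixed Dirichlet--Neumann problem on the half-triangle, and the same profile $J_0\big(j_{0,1}|z-B|/s\big)$ centered at the base vertex $B$ is the key special function --- the mechanism you use to extract the inequality is genuinely different. The paper stays purely variational: it extends the restricted mode by zero into the circular sector of radius $s$ centered at $B$ whose two straight sides contain the two Neumann edges and whose arc carries a Dirichlet condition; the extension is an admissible trial function for that sector, whose fundamental tone is exactly $j_{0,1}^2/s^2$ (with mode $J_0(j_{0,1}|z-B|/s)$), so domain monotonicity gives $\mu_1 \geq j_{0,1}^2/s^2$ using nothing about $u$ beyond membership in $H^1$ and vanishing on the symmetry axis. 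You instead run a Barta-type comparison on the half-triangle itself: Green's identity with the comparison function $\phi$, exploiting $\partial\phi/\partial\nu=0$ on the two sides through $B$, $\phi\geq 0$ because the farthest point of $T^+$ from $B$ is at distance $s$, and $u\geq 0$ with $\partial u/\partial\nu\leq 0$ on the Dirichlet edge. That is valid, but it costs the extra ingredients you rightly flag: one-signedness of the first mixed eigenfunction (together with the small remark that the mixed spectrum of $T^+$ coincides with the antisymmetric Neumann spectrum of $T(\alpha)$, so the restricted fundamental mode really is a first mixed eigenfunction --- or, more simply, run your identity with the first mixed eigenfunction and note the restricted mode is an admissible trial function, giving $\mu_1 \geq \lambda_1^{\mathrm{mix}} \geq j_{0,1}^2/s^2$), regularity of $u$ up to the open Dirichlet edge so the normal derivative has a sign, and an approximation near the corners to justify Green's identity. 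The paper's extension-by-zero buys all of this for free; your version buys a self-contained pointwise comparison that never treats the auxiliary sector as a separate eigenvalue problem. For the upper bound, citing Cheng's inequality \eqref{Cheng} is legitimate; the paper reproves it for superequilateral triangles in Lemma~\ref{chengsupereq} only to keep the argument self-contained.
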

Letting $\alpha \to \pi$ yields equality asymptotically in Proposition~\ref{1D}, for degenerate obtuse isosceles triangles. Thus Cheng's upper bound \eqref{Cheng} is best possible even in the restricted class of triangular domains, a fact that has been observed previously in the literature \cite[p.~10]{BaBu}.

Our lower bound in Proposition~\ref{1D} can be improved to $2j_{0,1}^2 (\pi-\alpha) \tan(\alpha/2)$ by combining antisymmetry of the fundamental mode (Theorem~\ref{bluntantisymmetric}) with a sectorial rearrangement result \cite[p.~114]{B79}. The improvement is substantial when the aperture $\alpha$ is close to the equilateral value $\pi/3$. On the other hand, sectorial rearrangement is nontrivial to prove, whereas the lower bound in Proposition~\ref{1D} uses only antisymmetry and domain monotonicity.

\section{\bf The equilateral triangle and its eigenfunctions} \label{equilateral}

This section gathers together the first three Neumann eigenfunctions
and eigenvalues of the equilateral triangle, which we use later to
construct trial functions for close-to-equilateral triangles.

The modes and frequencies of the equilateral triangle were derived
two centuries ago by Lam\'{e}, albeit without a proof of
completeness. We present the first few modes below. For proofs,
see the recent exposition (including completeness) by McCartin
\cite{M02}, building on work of Pr\'{a}ger \cite{Pr98}. A
different approach is due to Pinsky \cite{Pi80}.

Consider the the equilateral triangle $E$ with vertices at $(0,0)$,
$(1,0)$ and $(1/2,\sqrt{3}/2)$. Then $\mu_0=0$, with eigenfunction
$u_0 \equiv 1$, and
\[
\mu_1 = \mu_2 = \frac{16 \pi^2}{9}
\]
with eigenfunctions
  \begin{align*}
    u_1(x,y) & = 2 \Big[ \cos \big( \frac{\pi}{3}(2x-1) \big) + \cos \big( \frac{2\pi y}{\sqrt{3}} \big) \Big] \sin \big( \frac{\pi}{3} (2x-1)
    \big) , \\
    u_2(x,y) & = \cos \big( \frac{2\pi}3(2x-1) \big) - 2\cos\big( \frac\pi3(2x-1) \big) \cos\big( \frac{2\pi y}{\sqrt{3}} \big) .
  \end{align*}
Clearly $u_1$ is antisymmetric with respect to the line of symmetry
$\{ x=1/2 \}$ of the equilateral triangle, since
$u_1(1-x,y)=-u_1(x,y)$, whereas $u_2$ is symmetric with respect to
that line.

We evaluate some integrals of $u_1$ and $u_2$, for later use:
\begin{align*}
\int_E u_1^2 \, dA & = \int_E u_2^2 \, dA = \frac{3\sqrt{3}}{8} , \\
\int_E \Big( \frac{\partial u_1}{\partial x} \Big)^{\! 2} \, dA & = \int_E \Big( \frac{\partial u_2}{\partial y} \Big)^{\! 2} \, dA = \frac{32\pi^2 + 243}{32\sqrt{3}} , \\
\int_E \Big( \frac{\partial u_1}{\partial y} \Big)^{\! 2} \, dA &
= \int_E \Big( \frac{\partial u_2}{\partial x} \Big)^{\! 2} \, dA
= \frac{32\pi^2 - 243}{32\sqrt{3}} .
\end{align*}

\section{\bf Isosceles triangles}

In this section we focus on isosceles triangles, establishing bounds that will help show symmetry
of the fundamental mode for subequilateral triangles (in
Section~\ref{isec3}) and antisymmetry of the fundamental mode for
superequilateral triangles (in Section~\ref{isec4}).

\subsection{Bounds for sub- and super-equilateral triangles} \label{isec1}

First we bound the fundamental tone of an isosceles triangle, by
transplanting it to a sector. Write the polar coordinates as $(r,\theta)$, let $l>0$, and define
\[
S(\alpha) = \{ (x,y) : 0<r<l, |\theta| < \alpha/2 \}
\]
to be the sector of aperture $0<\alpha<2\pi$ and side length $l$.
\begin{lemma} \label{le:sector}
When $\alpha < \pi/2.68$, the sector $S(\alpha)$ has
fundamental tone
\[
\mu_1 \big( S(\alpha) \big) = (j_{1,1}/l)^2
\]
and fundamental mode $J_0(j_{1,1}r/l)$.
\end{lemma}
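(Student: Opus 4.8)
The plan is to separate variables on the sector $S(\alpha)$ and identify the first nonzero eigenvalue explicitly, using the smallness hypothesis $\alpha < \pi/2.68$ to rule out competing modes. By scaling it suffices to treat $l=1$; write $\nu = \pi/\alpha$ for the index governing the angular Neumann problem on $(-\alpha/2,\alpha/2)$. Separated Neumann eigenfunctions of the Laplacian on the sector have the form $J_{k\nu}(j_{k\nu,m}\,r)\cos\big(k\nu(\theta+\alpha/2)\big)$ for integers $k\geq 0$ and $m\geq 1$, where $j_{p,m}$ denotes the $m$-th positive zero of $J_p'$ when we impose the Neumann condition $\partial_r u = 0$ on the arc $r=1$; the corresponding eigenvalue is $j_{k\nu,m}^2$. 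The candidate fundamental mode is the $k=0$, $m=1$ function $J_0(j_{1,1}r)$ — note $J_0'(x)=-J_1(x)$, so the first positive zero of $J_0'$ is exactly $j_{1,1}$ — which is radial, hence trivially satisfies the Neumann condition on the two straight sides $\theta=\pm\alpha/2$, and satisfies $\partial_r u = 0$ at $r=1$ by choice of $j_{1,1}$. This shows $\mu_1(S(\alpha)) \leq j_{1,1}^2$.

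For the reverse inequality I must show no separated mode has a smaller positive eigenvalue. The $k=0$ modes give eigenvalues $j_{0,m}^2$ (zeros of $J_0'$), and among these $j_{1,1}^2$ is the smallest positive one. For $k\geq 1$ the relevant eigenvalue is $j_{k\nu,1}^2$, the square of the first positive zero of $J_{k\nu}'$; since $p\mapsto j_{p,1}'$ (first positive critical point of $J_p$) is increasing in $p\geq 0$, the smallest such value occurs at $k=1$, giving $j_{\nu,1}^2$ where the prime again denotes the first zero of the derivative. So the competition reduces to checking $j_{\nu,1}^{\,\prime} > j_{1,1}$, i.e. that the first positive zero of $J_\nu'$ exceeds $j_{1,1}\simeq 3.8317$ once $\nu = \pi/\alpha > 2.68$. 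Using the known lower bound $j_{\nu,1}' \geq \sqrt{\nu(\nu+2)}$ (or a comparable classical estimate for the first critical point of $J_\nu$), one has $\sqrt{\nu(\nu+2)} > \sqrt{2.68\cdot 4.68} > 3.54$; this is slightly too weak, so I would instead invoke the sharper bound $j_{\nu,1}' > \sqrt{(\nu+1)(\nu+3)}$ or the monotonicity of $\nu \mapsto j_{\nu,1}'/\nu$ together with a single numerical value at $\nu = 2.68$ to conclude $j_{\nu,1}' > j_{1,1}$ strictly. This is exactly where the constant $2.68$ comes from.

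Finally I need that the separated modes actually exhaust the spectrum: since the sector is a product domain in polar coordinates (a rectangle in $(r,\theta)$ with a weight), the eigenfunctions of the Neumann Laplacian decompose as a Fourier series in $\theta$ against $\{\cos(k\nu(\theta+\alpha/2))\}_{k\geq 0}$, and each Fourier coefficient solves a Bessel-type ODE in $r$ with the regularity condition at $r=0$ and the Neumann condition at $r=1$; completeness of the resulting eigenfunction system follows from standard Sturm–Liouville theory, so every eigenvalue is of the form $j_{k\nu,m}^2$ as above. Combining the three steps gives $\mu_1(S(\alpha)) = j_{1,1}^2$ with the stated eigenfunction.

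The main obstacle is the sharp numerical comparison in the second paragraph: squeezing the threshold down to $\pi/2.68$ requires a strong enough lower bound on the first zero of $J_\nu'$, and the elementary estimates are borderline. I expect to need either a refined inequality from the Bessel-function literature or a short explicit estimate tailored to $\nu$ near $2.68$, rather than the crudest available bound.
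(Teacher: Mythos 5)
Your outline follows the paper's proof almost exactly: separate variables, observe that the radial mode $J_0(j_{1,1}r)$ satisfies the Neumann conditions because $J_0^\prime=-J_1$, note that among the angular modes the lowest competitor corresponds to $k=1$ (eigenvalue $(j_{\nu,1}^\prime)^2$ with $\nu=\pi/\alpha$), and reduce the lemma to the comparison $j_{\nu,1}^\prime>j_{1,1}$ for $\nu>2.68$. The one place where your write-up goes astray is the final numerical step. The ``sharper bound'' $j_{\nu,1}^\prime>\sqrt{(\nu+1)(\nu+3)}$ that you propose to invoke is false at precisely the relevant order: at $\nu=2.68$ one has $j_{2.68,1}^\prime\simeq 3.8384$ while $\sqrt{3.68\cdot 5.68}\simeq 4.57$ (indeed the classical bounds put $j_{\nu,1}^\prime$ between $\sqrt{\nu(\nu+2)}$ and $\sqrt{2\nu(\nu+1)}$, and your proposed lower bound exceeds the upper one for $\nu<3$). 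Your fallback, monotonicity of $\nu\mapsto j_{\nu,1}^\prime/\nu$, also does not close the gap: that ratio is decreasing (numerically $1.84,\ 1.53,\ 1.43$ at $\nu=1,\ 2,\ 2.68$), so a value at $\nu=2.68$ only yields upper information for larger $\nu$. The fix is the fact you already used to reduce to $k=1$: $j_{\nu,1}^\prime$ itself is strictly increasing in $\nu$ (Watson; Lorch), so a single numerical evaluation $j_{2.68,1}^\prime\simeq 3.8384>3.8317\simeq j_{1,1}$ finishes the proof for all $\nu>2.68$ --- which is exactly how the paper argues. With that substitution (and your completeness remark, which the paper leaves as ``standard separation of variables''), the proof is sound.
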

This fundamental mode is symmetric with respect to the $x$-axis, which is the line of symmetry of the sector.
\begin{proof}
Consider an aperture $\alpha \in (0,2\pi)$. Fix the side length to be $l=1$, by rescaling. 

By a standard separation of variables argument, one reduces to comparing the
following two eigenvalues. First, one has the eigenvalue
$j_{1,1}^2$ associated with the nonconstant radial mode
$J_0(j_{1,1} r)$, where the Neumann boundary condition is satisfied at $r=1$ because $J_0^\prime = - J_1$. Second, one has the eigenvalue
$(j_{\nu,1}^\prime)^2$ associated with the angular mode
$J_\nu(j_{\nu,1}^\prime r) \sin (\nu \theta)$,  where $\nu=\pi/\alpha$. This angular mode is antisymmetric with respect to the $x$-axis.

We will show the radial mode gives the lower eigenvalue, when the
aperture is less than $\pi/2.68$, that is, when $\nu > 2.68$. 

It is known that $j_{\nu,1}^\prime$ is a strictly increasing function of $\nu$; one can consult the original proof in \cite[p.~510]{W52}, or the more elementary proof in \cite{L90}. Since $j_{2.68,1}^\prime \simeq 3.8384 > 3.8317 \simeq j_{1,1}$, we conclude $j_{\nu,1}^\prime > j_{1,1}$ when $\nu > 2.68$, which proves the lemma.

Incidentally, more precise numerical work reveals that the transition occurs at $\nu=2.6741$, to four decimal places.
\end{proof}

Next consider the isosceles triangle $T(\alpha)$ having aperture
$0<\alpha<\pi$, equal sides of length $l$, and vertex at the origin.
After rotating the triangle to make it symmetric about the
positive $x$-axis, it can be written as
\[
T(\alpha) = \{ (x,y) : 0<x<l\cos(\alpha/2), |y|<x\tan(\alpha/2) \} .
\]
Write $\mu_1(\alpha)$ for the fundamental tone of $T(\alpha)$. We will bound this tone in terms of the aperture.

\begin{lemma}\label{boundsiso}
When $0 < \alpha < \pi/3$, the subequilateral triangle $T(\alpha)$ satisfies
\[
\frac{j_{1,1}^2}{1+\tan(\alpha/2)+\tan^2(\alpha/2)} < \mu_1(\alpha) D^2 \leq \frac{j_{1,1}^2}{\cos^2(\alpha/2)} .
\]
\end{lemma}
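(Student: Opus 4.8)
The plan is to recognize both inequalities as comparisons with circular sectors. Since $\alpha<\pi/3$ the base $2l\sin(\alpha/2)$ is shorter than the equal sides, so the diameter is $D=l$; write $t=\tan(\alpha/2)$ and let $h=l\cos(\alpha/2)$ be the altitude from the apex to the base, so that $T(\alpha)=\{0<x<h,\ |y|<tx\}$. The upper bound claims $\mu_1(\alpha)\le (j_{1,1}/h)^2$, the fundamental tone of a sector of radius $h$, while the lower bound claims $\mu_1(\alpha)>\bigl(j_{1,1}/(l\sqrt{1+t+t^2})\,\bigr)^2$, the fundamental tone of a sector of radius $l\sqrt{1+t+t^2}$; both comparison sectors have aperture $\alpha<\pi/3<\pi/2.68$, so Lemma~\ref{le:sector} applies.

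For the upper bound I would test the Rayleigh quotient against the ``horn mode''
\[
  v(x,y)=J_0(j_{1,1}x/h),
\]
modeled on the separated equation $xv''+v'+\mu x v=0$ for the cross-sectionally averaged problem (the width of $T(\alpha)$ at height $x$ being $2tx$). Since $\int_0^{j_{1,1}}uJ_0(u)\,du=j_{1,1}J_1(j_{1,1})=0$, the function $v$ has mean value zero over $T(\alpha)$, and using $J_0'=-J_1$ one computes
\[
  R_{T(\alpha)}[v]=\frac{j_{1,1}^2}{h^2}\cdot\frac{\int_0^{j_{1,1}}uJ_1(u)^2\,du}{\int_0^{j_{1,1}}uJ_0(u)^2\,du}.
\]
Lommel's formula $\int_0^a uJ_\nu(u)^2\,du=\tfrac{a^2}2\bigl[J_\nu(a)^2-J_{\nu-1}(a)J_{\nu+1}(a)\bigr]$, together with $J_1(j_{1,1})=0$ and the recurrence $J_2(j_{1,1})=-J_0(j_{1,1})$, shows the two Bessel integrals are equal, so $R_{T(\alpha)}[v]=(j_{1,1}/h)^2$. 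The Rayleigh principle then gives $\mu_1(\alpha)D^2=\mu_1(\alpha)l^2\le j_{1,1}^2/\cos^2(\alpha/2)$, and the inequality is in fact strict because $v$ violates the natural boundary condition on the two slanted sides and so is not a fundamental mode.

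For the lower bound I would transplant the (unknown) fundamental mode $u$ of $T(\alpha)$ onto the circumscribed sector $S(\alpha)$ (same radius $l$), using the map $\Phi$ that for each fixed polar angle $\theta\in(-\alpha/2,\alpha/2)$ stretches the triangle's radial segment $0<\rho<h/\cos\theta$ linearly onto $0<r<l$, so that $r=\rho\cos\theta/\cos(\alpha/2)=:c(\theta)\rho$. With $w=u\circ\Phi^{-1}$ on $S(\alpha)$, a polar-coordinate computation gives $dA_S=c(\theta)^2\,dA_T$ and
\[
  \int_{S(\alpha)}|\nabla w|^2\,dA=\int_{T(\alpha)}\Bigl[(\partial_\rho u)^2+\bigl(\tan\theta\,\partial_\rho u+\tfrac1\rho\,\partial_\theta u\bigr)^{\!2}\Bigr]\,dA.
\]
Bounding the squared term by Young's inequality with parameter $\cot(\alpha/2)$ and using $\tan^2\theta<t^2$, the integrand is at most $(1+t+t^2)(\partial_\rho u)^2+(1+t)\rho^{-2}(\partial_\theta u)^2\le(1+t+t^2)|\nabla u|^2$. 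The transplant $w$ need not be mean-zero over $S(\alpha)$, but since $\int_{T(\alpha)}u\,dA=0$ and $0\le c(\theta)^2-1\le t^2$, applying Cauchy--Schwarz to $\bar w=|S(\alpha)|^{-1}\int_{T(\alpha)}u(c^2-1)\,dA$ and using $|T(\alpha)|\le|S(\alpha)|$ yields the clean estimate $\int_{S(\alpha)}(w-\bar w)^2\,dA\ge\int_{T(\alpha)}u^2\,dA$. Feeding $w-\bar w$ into the Rayleigh principle on $S(\alpha)$ and invoking $\mu_1(S(\alpha))=(j_{1,1}/l)^2$ from Lemma~\ref{le:sector} gives $(j_{1,1}/l)^2\le(1+t+t^2)\,\mu_1(\alpha)$, i.e.\ $\mu_1(\alpha)D^2\ge j_{1,1}^2/(1+t+t^2)$, with strictness coming from the strict bound $\tan^2\theta<t^2$ on the interior.

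The routine parts are the two Bessel-integral evaluations and the change of variables; the crux is the lower bound, where the stretching map produces the sign-indefinite cross term $\tan\theta\,\partial_\rho u\,\partial_\theta u$ and destroys the mean-zero normalization. The point I expect to matter is that the Young parameter $\cot(\alpha/2)$ is exactly what makes the two resulting gradient coefficients, $1+t+t^2$ and $1+t$, both fit under $1+t+t^2$, and that the $L^2$ comparison $\int_{S(\alpha)}(w-\bar w)^2\ge\int_{T(\alpha)}u^2$ holds with constant exactly $1$ rather than $1-O(t^2)$, which would otherwise spoil the bound.
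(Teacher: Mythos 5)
Your proof is correct, and at its core it is the same radial-stretch transplantation between $T(\alpha)$ and the sector $S(\alpha)$ that the paper uses: your map $\Phi$ is the paper's $\sigma^{-1}$ (your $c(\theta)$ is $1/\rho(\theta)$), and your upper-bound trial function $J_0(j_{1,1}x/h)$ is literally the paper's transplanted sector mode $v\circ\sigma^{-1}$. The differences are in the bookkeeping, and they are worth recording. For the upper bound you evaluate the Rayleigh quotient directly via Lommel's integrals (both Bessel integrals equal $\tfrac12 j_{1,1}^2 J_0(j_{1,1})^2$), whereas the paper pulls the computation back to the disk and gets the mean-zero property from the divergence theorem; both are clean, and your version is self-contained on the triangle. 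For the lower bound the paper adds the mean-adjusting constant \emph{before} estimating, using $(v+C)\circ\sigma$ with $C$ chosen to kill the mean over $S(\alpha)$ (this is the same trial function as your $w-\bar w$, since $C=-\bar w$), so the denominator bound is immediate: $\int_T (v+C)^2 c^2\,dA \ge \int_T (v+C)^2\,dA \ge \int_T v^2\,dA$ using only $c\ge 1$ and $\int_T v\,dA=0$. Your subtract-the-mean-afterwards route instead needs the Cauchy--Schwarz step, and note it only works in the weighted form $\bigl(\int_T u(c^2-1)\,dA\bigr)^2 \le \int_T u^2(c^2-1)\,dA \cdot \int_T (c^2-1)\,dA$ combined with $\int_T (c^2-1)\,dA = |S(\alpha)|-|T(\alpha)| \le |S(\alpha)|$; the cruder pointwise bound $c^2-1\le t^2$ that you also invoke would not by itself give the constant exactly $1$. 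Finally, for strictness of the lower bound you should add that $u$ is nonconstant, so $\nabla u \ne 0$ on a set of positive measure, where either the coefficient $1+(1+1/t)\tan^2\theta < 1+t+t^2$ (when $u_\rho\ne0$) or the coefficient $1+t<1+t+t^2$ (when $u_\theta\ne0$) is strictly smaller; the paper gets strictness more simply from the denominator, since $\rho(\theta)<1$ almost everywhere.
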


\begin{proof}
Note $T(\alpha)$ has diameter $D=l$, since the triangle is subequilateral.

We will transplant eigenfunctions between the triangle and the sector by stretching in the radial direction, a technique borrowed from work on Dirichlet eigenvalues by Freitas \cite[\S3]{F07}. Define
\[
\rho(\theta) = \frac{\cos(\alpha/2)}{\cos (\theta)} , \qquad |\theta| < \alpha/2 ,
\]
so that the transformation
\[
\sigma(r,\theta) = \big(r \rho(\theta),\theta \big)
\]
maps the sector $S(\alpha)$ onto the isosceles triangle
$T(\alpha)$.

\smallskip
(a) For the lower bound in the lemma, let $v$ be a fundamental mode of the triangle. We use $(v+C)\circ \sigma$ as a trial function for the sector; the constant $C$ is chosen to ensure the trial function has mean value zero, $\int_{S(\alpha)} (v+C) \circ \sigma \, dA =0$. Then the denominator of the Rayleigh quotient is
\begin{align*}
\int_{S(\alpha)} |(v+C)\circ \sigma|^2 \, dA
& = \int_{T(\alpha)} (v+C)^2 \frac{r}{\rho} \, \frac{dr}{\rho}d\theta && \text{by $r \mapsto r/\rho(\theta)$} \\
& > \int_{T(\alpha)} (v+C)^2 \, rdrd\theta && \text{since $\rho < 1$} \\
& \geq \int_{T(\alpha)} v^2 \,dA
\end{align*}
since $\int_{T(\alpha)} v \, dA = 0$ and $C^2 \geq 0$.

For the numerator of the Rayleigh quotient, we first apply the chain rule:
\[
(v \circ \sigma)_r = (v_r \circ \sigma) \cdot \rho , \qquad (v \circ \sigma)_\theta = (v_r \circ \sigma)\cdot r \rho^\prime + (v_\theta \circ \sigma) .
\]
Hence the numerator is
\begin{align}
& \int_{S(\alpha)} |\nabla ( (v+C)\circ \sigma)|^2 \, dA \notag \\
& = \int_{S(\alpha)} \left[ (v_r \circ \sigma)^2 \rho^2 + \frac{1}{r^2}\left( (v_r \circ \sigma)\cdot r \rho^\prime + (v_\theta \circ \sigma) \right)^2 \right] \, r drd\theta \notag \\
& = \int_{T(\alpha)} \left[ v_r^2+\frac{1}{r^2}\Big( v_r \frac{r}{\rho} \rho^\prime + v_\theta \Big)^{\! 2} \right] \, r drd\theta \qquad \text{by $r \mapsto r/\rho(\theta)$} \label{eq:varchange} \\
& \leq \int_{T(\alpha)} \left[ v_r^2\left(1+\left(\frac{\rho^\prime}{\rho}\right)^{\! 2}+\left|\frac{\rho^\prime}{\rho}\right|\right)+\frac{v_\theta^2}{r^2}\left(1+\left|\frac{\rho^\prime}{\rho}\right| \right)\right] dA \notag \\
& \leq [1+\tan(\alpha/2)+\tan^2(\alpha/2)] \int_{T(\alpha)} |\nabla v|^2 \, dA \notag
\end{align}
since $|\rho^\prime/\rho|=|\tan(\theta)| \leq \tan(\alpha/2)$. Combining the numerator and denominator, we see
\[
  \mu_1 \big( S(\alpha) \big) \leq \frac{\int_{S(\alpha)} |\nabla ( (v+C)\circ \sigma)|^2 \, dA}{\int_{S(\alpha)} |(v+C)\circ \sigma|^2 \, dA} < [1+\tan(\alpha/2)+\tan^2(\alpha/2)] \mu_1(\alpha).
\]
Recalling that $\mu_1 \big( S(\alpha) \big) = (j_{1,1}/l)^2$ and $D=l$, we deduce the lower bound in Lemma~\ref{boundsiso}.

\smallskip
(b) To get an upper bound, we transplant the eigenfunction of the sector to yield a trial
function for the triangle. Write
\[
  v(r)=J_0 \left( j_{1,1}\frac{r}{l} \right)
\]
for the fundamental mode of the sector $S(\alpha)$. Notice $v$ is also a radial mode of the disk $\D(l)$ of radius $l$, satisfying $-\Delta v = (j_{1,1}/l)^2 v$ with normal derivative $\partial v/\partial r = 0$ at $r=l$ (using that $J_0^\prime = -J_1$). Hence $\int_{\D(l)} v \, dA = 0$ by the divergence theorem. Thus the transplanted eigenfunction $v \circ \sigma^{-1}$ integrates to $0$ over the triangle $T(\alpha)$:
\begin{align*}
\int_{T(\alpha)} (v \circ \sigma^{-1}) \, dA
& = \int_{S(\alpha)} v \, \rho^2 rdr d\theta \qquad \text{since $\sigma^{-1}(r,\theta)=(r/\rho(\theta),\theta)$} \\
& = \int_{-\alpha/2}^{\alpha/2} \rho^2 \, d\theta \int_0^l v \, rdr \\
& = \frac{1}{2\pi} \big( 2\sin (\alpha/2) \cos (\alpha/2) \big) \int_{\D(l)} v \, dA \\
& = 0 .
\end{align*}
Similarly, the denominator of the Rayleigh quotient for $v \circ \sigma^{-1}$ evaluates to
\[
\int_{T(\alpha)} (v \circ \sigma^{-1})^2 \, dA
= \frac{1}{2\pi} (2\sin \alpha/2 \cos \alpha/2) \int_{\D(l)} v^2 \, dA .
\]
The numerator of the Rayleigh quotient equals
\begin{align*}
& \int_{T(\alpha)} |\nabla (v \circ \sigma^{-1})|^2 \, dA \\
& = \int_{S(\alpha)} \left[ v_r^2+\frac{1}{r^2}\left(v_r r\rho (1/\rho)^\prime + v_\theta\right)^{\! 2} \right] \, r drd\theta && \text{by arguing like for \eqref{eq:varchange}} \\
& = \int_{-\alpha/2}^{\alpha/2} \big( 1 + (\rho^\prime/\rho)^2 \big) \, d\theta \int_0^l v_r^2 \, rdr && \text{since $v_\theta \equiv 0$} \\
& = \frac{1}{2\pi} \big( 2 \tan (\alpha/2) \big) \int_{\D(l)} |\nabla v|^2 \, dA .
\end{align*}
We conclude
\begin{align*}
\mu_1(\alpha)
& \leq R[v \circ \sigma^{-1}] \\
& = \frac{2 \tan (\alpha/2)}{2\sin (\alpha/2) \cos (\alpha/2)} \, \frac{\int_{\D(l)} |\nabla v|^2 \, dA}{\int_{\D(l)} v^2 \, dA} \\
& = \frac{1}{\cos^2 (\alpha/2)} (j_{1,1}/l)^2 ,
\end{align*}
which gives the upper estimate in Lemma~\ref{boundsiso}.
\end{proof}

Next we give a proof for superequilateral triangles of Cheng's bound \eqref{Cheng}.

\begin{lemma} \label{chengsupereq}
When $\pi/3 < \alpha < \pi$, the superequilateral triangle $T(\alpha)$ satisfies
\[
\mu_1(\alpha) D^2 < 4 j_{0,1}^2 .
\]
Here the diameter is $D=2 l \sin(\alpha/2)$, which is the length of the vertical side.
\end{lemma}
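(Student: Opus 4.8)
The plan is to produce an explicit mean‑zero trial function for $T(\alpha)$ whose Rayleigh quotient equals exactly $4j_{0,1}^2/D^2$, so that the Rayleigh principle gives $\mu_1(\alpha)D^2\le 4j_{0,1}^2$, and then to argue that the inequality is strict. The trial function is modeled on the half‑triangle picture: cutting $T(\alpha)$ along its shortest altitude (the line of symmetry) turns $\mu_1(\alpha)$ into the first eigenvalue of a mixed Dirichlet (on the altitude) -- Neumann (on the two other sides) problem on a right sub‑triangle, and into that sub‑triangle one can inscribe a circular sector of radius $D/2$ at the endpoint of the long side, with both radii lying along Neumann sides and arc tangent to the altitude. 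I would, however, skip the half‑triangle formalism and package everything into one odd trial function on $T(\alpha)$ itself; in particular the antisymmetry result (Theorem~\ref{bluntantisymmetric}) is not needed, which is convenient since it is proved later in the paper.

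Concretely, set up coordinates so that $T(\alpha)=\{0<x<l\cos(\alpha/2),\ |y|<x\tan(\alpha/2)\}$, with the long (vertical) side having endpoints $P_\pm=(l\cos(\alpha/2),\pm D/2)$ where $D=2l\sin(\alpha/2)$. Let $\omega_+$ be the open circular sector of radius $D/2$ centered at $P_+$ with angular opening the interior angle $(\pi-\alpha)/2$ of $T(\alpha)$ at $P_+$, positioned so its two straight edges run along the two sides of $T(\alpha)$ meeting at $P_+$, and let $\omega_-\subset T(\alpha)$ be its mirror image across $\{y=0\}$. On $\omega_+$ put $\phi(z)=J_0\!\big(2 j_{0,1}|z-P_+|/D\big)$: this is radial, equals $1$ at $P_+$, vanishes on the arc because $J_0(j_{0,1})=0$, and has zero normal derivative on the two radii. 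Define $v$ on $T(\alpha)$ to be $\phi$ on $\omega_+$, minus the reflected $\phi$ on $\omega_-$, and $0$ elsewhere. Since $\phi$ vanishes on the arcs (which are the only portions of $\partial\omega_\pm$ lying in the interior of $T(\alpha)$), the function $v$ lies in $H^1(T(\alpha))$, and being odd in $y$ it has mean value zero. Then $\mu_1(\alpha)\le R[v]$, and since the angular integrations cancel in numerator and denominator, $R[v]=R_{\omega_+}[\phi]=\big(\int_0^{D/2}\phi'(r)^2 r\,dr\big)/\big(\int_0^{D/2}\phi(r)^2 r\,dr\big)$; integrating by parts and using the radial Bessel identity $(r\phi')'=-(2j_{0,1}/D)^2\,r\phi$ together with $\phi(D/2)=0$ collapses this to $(2j_{0,1}/D)^2$. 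Hence $\mu_1(\alpha)D^2\le 4j_{0,1}^2$, and the inequality is strict because $v$ vanishes identically on a neighborhood of the apex (which lies at distance $l>D/2$ from each $P_\pm$), whereas a genuine first Neumann eigenfunction is analytic and cannot vanish on an open set.

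The one point needing care -- the main thing to check -- is that the sector $\omega_+$ of radius $D/2$ really is contained in $T(\alpha)$: a priori its arc might cross the far side $OP_-$. Here superequilateral geometry enters: since $P_+$ has height $D/2$, the entire disc of radius $D/2$ about $P_+$ lies in the open half‑plane $\{y>0\}$, which is disjoint from $OP_-\subset\{y\le 0\}$; and the open wedge at $P_+$ lies strictly on the interior side of each of the two lines bounding $T(\alpha)$ there. Combining these, $\omega_+$ sits inside $T(\alpha)$ with its arc meeting $\partial T(\alpha)$ only at the foot of the altitude. A one‑line coordinate computation makes this precise, after which the rest is routine, the Bessel integration by parts being of the same flavor as in the proof of Lemma~\ref{boundsiso}(b) with $j_{0,1}$ in place of $j_{1,1}$.
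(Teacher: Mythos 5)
Your proposal is correct and follows essentially the same route as the paper: Cheng's construction specialized to the triangle, with two oppositely signed copies of the radial Dirichlet mode $J_0\bigl(j_{0,1}|z-z_\pm|/(D/2)\bigr)$ supported on sectors at the endpoints of the long side, giving an odd (hence mean-zero) trial function with Rayleigh quotient exactly $4j_{0,1}^2/D^2$. The only cosmetic difference is the strictness argument, where you invoke analyticity and unique continuation (the trial function vanishes near the apex) while the paper simply notes the trial function is not smooth and hence not an eigenfunction.
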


\begin{proof}
We simply specialize Cheng's method to triangles. Denote the upper and lower vertices of $T(\alpha)$ by
\[
z_\pm = \big( l \cos(\alpha/2) , \pm l \sin(\alpha/2) \big) ,
\]
so that the diameter is $D=|z_+ - z_-|$. Write
\[
v_0(z)=J_0 \! \left( j_{0,1}\frac{|z|}{D/2} \right)
\]
for the fundamental mode of the disk of radius $D/2$. Define a trial function for $z=(x,y)$ in $T(\alpha)$ by
\[
v(z) =
\begin{cases}
+v_0(z-z_+) , & \text{if $|z-z_+|<D/2$,} \\
-v_0(z-z_-) , & \text{if $|z-z_-|<D/2$,} \\
0 , & \text{otherwise.}
\end{cases}
\]
Notice $v$ is continuous and piecewise smooth with mean value zero, and is supported on two circular sectors. Since $v_0$ is radial, we compute
\[
\mu_1(\alpha) \leq R_{T(\alpha)}[v] = R_{\D(D/2)}[v_0] = \Big( \frac{j_{0,1}}{D/2} \Big)^{\! 2} .
\]
Equality cannot hold, since $v$ is not smooth and hence is not an eigenfunction for $T(\alpha)$.
\end{proof}

For general convex domains, the support of $v$ is not just a union of sectors, and thus further arguments are required to prove Cheng's general bound.

\subsection{Linear transformations for isosceles triangles} \label{isec2}

Define a diagonal linear transformation
\[
    \tau(x,y) = \Big( x\frac{\cos(\beta/2)}{\cos(\alpha/2)}, y\frac{\sin(\beta/2)}{\sin(\alpha/2)} \Big)
\]
mapping $T(\alpha)$ onto $T(\beta)$. We develop a result about transplanting eigenfunctions from one isosceles triangle to another. (The method applies to more general domains, such as ellipses, whenever the domains map to one another under a diagonal linear transformation.)

\begin{lemma}\label{lemcomp}
Let $\mu(\alpha)$ and $\mu(\beta)$ be eigenvalues of the triangles $T(\alpha)$ and $T(\beta)$ respectively, for some $\alpha, \beta \in (0,\pi)$. Let $w$ be a nonconstant eigenfunction belonging to $\mu(\beta)$, and assume $w\circ\tau$  can be used as a trial function for $\mu(\alpha)$, meaning $\mu(\alpha) \leq R[w \circ \tau]$.

If either condition (i) or (ii) below holds, for some real number $G(\beta)$, then
\[
     \mu(\alpha) < \big( 1+G(\beta) \big) \mu(\beta) .
\]
The conditions are:

(i) $\alpha<\beta$ and
\[
    \frac{\int_{T(\beta)} w_y^2 \, dA}{\int_{T(\beta)} (w_x^2+w_y^2) \, dA}
    < \sin^2(\alpha/2)+\frac{\sin^2(\alpha/2)\cos^2(\alpha/2)}{\sin^2(\beta/2)-\sin^2(\alpha/2))}G(\beta) ;
\]

(ii) $\alpha>\beta$ and
\[
    \frac{\int_{T(\beta)} w_y^2 \, dA}{\int_{T(\beta)} (w_x^2+w_y^2) \, dA}
    > \sin^2(\alpha/2)+\frac{\sin^2(\alpha/2)\cos^2(\alpha/2)}{\sin^2(\beta/2)-\sin^2(\alpha/2))}G(\beta) .
\]
\end{lemma}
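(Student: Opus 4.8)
The plan is to compute the Rayleigh quotient of the trial function $w \circ \tau$ directly and bound it by $(1+G(\beta))\mu(\beta)$, treating the two cases $\alpha < \beta$ and $\alpha > \beta$ in parallel. First I would record how $\tau$ acts on derivatives: since $\tau$ is the diagonal map with $x$-factor $a = \cos(\beta/2)/\cos(\alpha/2)$ and $y$-factor $b = \sin(\beta/2)/\sin(\alpha/2)$, the chain rule gives $(w\circ\tau)_x = a\,(w_x\circ\tau)$ and $(w\circ\tau)_y = b\,(w_y\circ\tau)$. Changing variables by $\tau$ in every integral over $T(\alpha)$ (the Jacobian is the constant $ab$, which cancels between numerator and denominator), I get
\[
R_{T(\alpha)}[w\circ\tau] = \frac{a^2 \int_{T(\beta)} w_x^2\,dA + b^2 \int_{T(\beta)} w_y^2\,dA}{\int_{T(\beta)} w^2\,dA}.
\]
Here one must note that $w\circ\tau$ is admissible (mean value zero is preserved because $\tau$ scales area by a constant and $w$ has mean value zero over $T(\beta)$), which is exactly the hypothesis $\mu(\alpha)\le R[w\circ\tau]$ that we are told to assume.

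Next I would rewrite the numerator using $w$ being an eigenfunction: $\int_{T(\beta)}(w_x^2+w_y^2)\,dA = \mu(\beta)\int_{T(\beta)} w^2\,dA$. Writing $N = \int_{T(\beta)} w^2\,dA$, $X = \int_{T(\beta)} w_x^2\,dA$, $Y = \int_{T(\beta)} w_y^2\,dA$, so $X+Y = \mu(\beta) N$, the estimate becomes
\[
\mu(\alpha) \le \frac{a^2 X + b^2 Y}{N} = \frac{a^2(\mu(\beta)N - Y) + b^2 Y}{N} = a^2\mu(\beta) + (b^2 - a^2)\frac{Y}{N}.
\]
Now substitute $a^2 = \cos^2(\beta/2)/\cos^2(\alpha/2)$ and $b^2 = \sin^2(\beta/2)/\sin^2(\alpha/2)$; a short trigonometric simplification gives $b^2 - a^2 = \big(\sin^2(\beta/2)-\sin^2(\alpha/2)\big)\big/\big(\sin^2(\alpha/2)\cos^2(\alpha/2)\big)$, which is positive when $\beta > \alpha$ and negative when $\beta < \alpha$. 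The goal $\mu(\alpha) < (1+G(\beta))\mu(\beta)$ is therefore equivalent, after dividing by the (positive) quantity $\mu(\beta) N$ and isolating $Y/N$, to an inequality of the form
\[
\frac{Y}{N} \lessgtr \big(1 + G(\beta) - a^2\big)\frac{\mu(\beta)}{b^2-a^2} N^{-1}\cdot N,
\]
with the direction of the inequality flipping according to the sign of $b^2-a^2$; carrying out this algebra and expressing $\mu(\beta) = (X+Y)/N$ converts the condition into precisely the stated hypothesis $Y/(X+Y) \lessgtr \sin^2(\alpha/2) + \dfrac{\sin^2(\alpha/2)\cos^2(\alpha/2)}{\sin^2(\beta/2)-\sin^2(\alpha/2)}G(\beta)$, case (i) with $<$ when $\alpha<\beta$ and case (ii) with $>$ when $\alpha>\beta$.

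The only genuinely delicate points are bookkeeping ones: first, tracking the sign of $b^2 - a^2$ so that the inequality reverses correctly between cases (i) and (ii) — this is where an error would most naturally creep in, and I would double-check it against the degenerate sanity check $\alpha=\beta$ (where $a=b=1$, $G=0$, and both conditions become vacuous equalities). Second, one should confirm strictness: the final inequality is strict because $w$ is nonconstant, so $w \circ \tau$ is nonconstant and, being merely a trial function rather than an eigenfunction of $T(\alpha)$ unless $\tau$ happens to map eigenfunctions to eigenfunctions, cannot in general achieve the minimum — but in fact strictness here comes for free from the strict trigonometric inequality $\rho<1$-type estimates already built into the hypothesis being strict. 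I would simply observe that the hypothesis inequalities in (i) and (ii) are strict, so the derived bound on $\mu(\alpha)$ is strict as well. No compactness or regularity machinery is needed beyond the change-of-variables formula and the Rayleigh characterization already quoted.
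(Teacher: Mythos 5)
Your proof is correct and takes essentially the same route as the paper's: both compute $R[w\circ\tau]$ via the chain rule and the change of variables $\tau$, obtaining $\big(a^2(1-\kappa)+b^2\kappa\big)\mu(\beta)$ with $a=\cos(\beta/2)/\cos(\alpha/2)$, $b=\sin(\beta/2)/\sin(\alpha/2)$, $\kappa=\int_{T(\beta)}w_y^2\,dA\big/\int_{T(\beta)}(w_x^2+w_y^2)\,dA$, and then use the identity $b^2-a^2=\big(\sin^2(\beta/2)-\sin^2(\alpha/2)\big)\big/\big(\sin^2(\alpha/2)\cos^2(\alpha/2)\big)$, whose sign separates cases (i) and (ii), to see that the hypotheses are exactly the condition $a^2(1-\kappa)+b^2\kappa<1+G(\beta)$. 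Your algebra, sign bookkeeping, and strictness observation all check out (strictness also uses $\mu(\beta)>0$, which holds since $w$ is nonconstant).
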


\begin{proof}
Observe
\[
\frac{\sin^2(\beta/2)}{\sin^2(\alpha/2)} - \frac{\cos^2(\beta/2)}{\cos^2(\alpha/2)} = \frac{\sin^2(\beta/2) - \sin^2(\alpha/2)}{\sin^2(\alpha/2) \cos^2(\alpha/2)} .
\]
Multiplying these expressions on the left and right of (i), respectively, implies that
\begin{equation} \label{eq:G}
\frac{\cos^2(\beta/2)}{\cos^2(\alpha/2)} (1-\kappa ) + \frac{\sin^2(\beta/2)}{\sin^2(\alpha/2)} \kappa < 1+G(\beta)
\end{equation}
where $\kappa = \int_{T(\beta)} w_y^2 \, dA \Big/ \int_{T(\beta)} (w_x^2+w_y^2) \, dA$. The same holds for (ii). Therefore
\begin{align*}
\mu(\alpha) & \leq R[w \circ \tau] \\
& = \frac{\int_{T(\alpha)}\left(\frac{\cos^2(\beta/2)}{\cos^2(\alpha/2)}w_x^2+\frac{\sin^2(\beta/2)}{\sin^2(\alpha/2)}w_y^2\right)\circ \tau \, dA}{\int_{T(\alpha)}|w\circ \tau|^2 \, dA} \\
& = \frac{\int_{T(\beta)} \left(\frac{\cos^2(\beta/2)}{\cos^2(\alpha/2)}w_x^2+\frac{\sin^2(\beta/2)}{\sin^2(\alpha/2)}w_y^2\right) \, dA}{\int_{T(\beta)} w^2 \, dA} \qquad \text{by changing variable} \\
& = \left( \frac{\cos^2(\beta/2)}{\cos^2(\alpha/2)} (1-\kappa) + \frac{\sin^2(\beta/2)}{\sin^2(\alpha/2)} \kappa \right) R[w] \\
& < \big( 1+G(\beta) \big) R[w] \label{mucomp}
\end{align*}
by \eqref{eq:G}. Since $w$ is an eigenfunction for $\mu(\beta)$ we have $R[w]=\mu(\beta)$, and so the proof is complete.
\end{proof}

The lemma simplifies considerably when the number $G(\beta)$ is zero:
\begin{corollary}\label{corcomp}
Let $\mu(\alpha)$ and $\mu(\beta)$ be eigenvalues of the triangles $T(\alpha)$ and $T(\beta)$ respectively, for some $\alpha, \beta \in (0,\pi)$. Let $w$ be a nonconstant eigenfunction belonging to $\mu(\beta)$, and assume $w\circ\tau$  can be used as a trial function for $\mu(\alpha)$, meaning $\mu(\alpha) \leq R[w \circ \tau]$. Then $\mu(\alpha) < \mu(\beta)$ if
\begin{align*}
\text{(i)} \qquad \alpha<\beta & \quad \text{and} \quad \frac{\int_{T(\beta)} w_y^2 \, dA}{\int_{T(\beta)} w_x^2 \, dA} < \tan^2(\alpha/2) \\
\text{or \ (ii)} \qquad \alpha>\beta & \quad \text{and} \quad \frac{\int_{T(\beta)} w_y^2 \, dA}{\int_{T(\beta)} w_x^2 \, dA} > \tan^2(\alpha/2) .
\end{align*}
\end{corollary}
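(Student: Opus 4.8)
The plan is to derive Corollary~\ref{corcomp} directly from Lemma~\ref{lemcomp} by specializing to the case $G(\beta)=0$ and simplifying the hypotheses accordingly. Setting $G(\beta)=0$, the conclusion $\mu(\alpha) < (1+G(\beta))\mu(\beta)$ becomes exactly $\mu(\alpha) < \mu(\beta)$, so nothing needs to be done for the conclusion; the entire task is to show that each hypothesis (i) or (ii) in the corollary implies the corresponding hypothesis (i) or (ii) in the lemma.

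First I would write out what the lemma's hypothesis (i) says when $G(\beta)=0$: it becomes
\[
\frac{\int_{T(\beta)} w_y^2 \, dA}{\int_{T(\beta)} (w_x^2+w_y^2)\, dA} < \sin^2(\alpha/2) .
\]
Then I would perform the elementary manipulation: writing this as $\kappa < \sin^2(\alpha/2)$ where $\kappa$ is the ratio of $\int w_y^2$ to $\int(w_x^2+w_y^2)$, one has $\kappa/(1-\kappa) = \int w_y^2 / \int w_x^2$, and since $t \mapsto t/(1-t)$ is increasing on $(0,1)$, the inequality $\kappa < \sin^2(\alpha/2)$ is equivalent to
\[
\frac{\int_{T(\beta)} w_y^2 \, dA}{\int_{T(\beta)} w_x^2 \, dA} < \frac{\sin^2(\alpha/2)}{1-\sin^2(\alpha/2)} = \frac{\sin^2(\alpha/2)}{\cos^2(\alpha/2)} = \tan^2(\alpha/2) ,
\]
which is precisely hypothesis (i) of the corollary. (One should note $\int_{T(\beta)} w_x^2 \, dA \neq 0$, since otherwise $w$ would depend only on $y$, and a nonconstant eigenfunction of a triangle cannot be a function of one variable alone; alternatively if $\int w_x^2 = 0$ the corollary's ratio is not even defined, so this degenerate case is implicitly excluded. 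Likewise $\kappa \in [0,1)$ so no division-by-zero arises.) The same computation, with all inequalities reversed, shows that hypothesis (ii) of the corollary implies hypothesis (ii) of the lemma. Then Lemma~\ref{lemcomp} applies and yields $\mu(\alpha) < \mu(\beta)$.

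I do not expect any real obstacle here — this is a routine simplification. The only point requiring a moment's care is the monotonicity step converting the statement about $\kappa/(1-\kappa)$ into the statement about the ratio of the two separate integrals, and making sure the edge case $\int_{T(\beta)} w_x^2\,dA = 0$ is harmless; both are trivial. I would present the proof in two or three lines, essentially: "Apply Lemma~\ref{lemcomp} with $G(\beta)=0$. Hypothesis (i) of the lemma then reads $\int w_y^2 / \int(w_x^2+w_y^2) < \sin^2(\alpha/2)$, which upon dividing is equivalent to $\int w_y^2/\int w_x^2 < \tan^2(\alpha/2)$; similarly for (ii). The conclusion $\mu(\alpha)<\mu(\beta)$ is immediate."

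\begin{proof}
Apply Lemma~\ref{lemcomp} with $G(\beta)=0$, so that the conclusion becomes $\mu(\alpha) < \mu(\beta)$. It remains only to check that hypothesis (i) (resp.\ (ii)) of the corollary implies hypothesis (i) (resp.\ (ii)) of the lemma.

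Write
\[
\kappa = \frac{\int_{T(\beta)} w_y^2 \, dA}{\int_{T(\beta)} (w_x^2+w_y^2) \, dA} \in [0,1) ,
\]
noting that $\int_{T(\beta)} w_x^2 \, dA > 0$ because a nonconstant eigenfunction of a triangle cannot depend on $y$ alone. With $G(\beta)=0$, hypothesis (i) of Lemma~\ref{lemcomp} reads $\kappa < \sin^2(\alpha/2)$. Since $t \mapsto t/(1-t)$ is strictly increasing on $[0,1)$ and $\kappa/(1-\kappa) = \int_{T(\beta)} w_y^2 \, dA \big/ \int_{T(\beta)} w_x^2 \, dA$, this is equivalent to
\[
\frac{\int_{T(\beta)} w_y^2 \, dA}{\int_{T(\beta)} w_x^2 \, dA} < \frac{\sin^2(\alpha/2)}{1-\sin^2(\alpha/2)} = \tan^2(\alpha/2) ,
\]
which is hypothesis (i) of the corollary. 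Reversing every inequality shows likewise that hypothesis (ii) of the corollary is equivalent to hypothesis (ii) of Lemma~\ref{lemcomp}. In either case Lemma~\ref{lemcomp} gives $\mu(\alpha) < \mu(\beta)$.
\end{proof}
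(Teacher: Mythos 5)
Your proof is correct and matches the paper's intent exactly: the paper offers no separate argument for the corollary beyond the remark that Lemma~\ref{lemcomp} ``simplifies considerably'' when $G(\beta)=0$, and your specialization plus the elementary rewriting of $\kappa<\sin^2(\alpha/2)$ as $\int w_y^2/\int w_x^2<\tan^2(\alpha/2)$ (and likewise with the inequality reversed) is precisely that simplification. The side remark ruling out $\int_{T(\beta)} w_x^2\,dA=0$ is a harmless extra check.
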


\section{\bf Proof of Theorem~\ref{sharpsymmetric}: symmetry of the fundamental mode for subequilateral triangles} \label{isec3}

Eigenfunctions of an isosceles triangle can be assumed either symmetric or antisymmetric. Indeed, any eigenfunction $v$ can be decomposed into the sum of its symmetric part $(v+v^r)/2$ and antisymmetric part $(v-v^r)/2$, where $v^r$ denotes the reflection of $v$ across the line of symmetry of the triangle. Each of these two parts is itself an eigenfunction, unless it is identically zero (as happens when the eigenfunction is already symmetric or antisymmetric).

We will show that for a subequilateral triangle, the fundamental tone satisfies $\mu_1 D^2 < 16\pi^2/9$, whereas the smallest eigenvalue $\mu_a$ having an antisymmetric eigenfunction satisfies $\mu_a D^2 > 16\pi^2/9$. (See Figure~\ref{mustarfig}.) It follows that every fundamental mode of the triangle is symmetric.

\begin{figure}[t]
  \begin{center}
\begin{tikzpicture}[smooth,xscale=5,yscale=0.4]
  \draw[<->] (1.2,12) node [below] {\tiny $\alpha$} -- (0,12) -- (0,25);
      \clip (-0.3,11) rectangle (1.2,25);
      \draw (0,12) -- +(0,-0.12) node [below] {\tiny $0$};
      \draw (1.0472,12) -- +(0,-0.12) node [below] {\tiny $\pi/3$};
      \draw[loosely dotted] (pi/3,12) -- +(0,16*pi^2/9-12);
      \draw (0,12) -- +(-0.02,0) node [left] {\tiny $12$};
      \draw (0,16) -- +(-0.02,0) node [left] {\tiny $16$};
      \draw (0,20) -- +(-0.02,0) node [left] {\tiny $20$};
      \draw (0,24) -- +(-0.02,0) node [left] {\tiny $24$};
      \draw (0,14.682) -- +(-0.02,0) node [left] {\tiny $j_{1,1}^2$};
      \draw (0,17.546) -- +(-0.02,0) node [left] {\tiny $16\pi^2/9$};
      \draw[loosely dotted] (0,17.546) -- +(pi/3,0);
      \draw[densely dotted,domain=0:0.33] plot (\x,{3.8317^2/(1+tan(\x*90/3.1416)^2+tan(\x*90/3.1416))});
      \draw[densely dotted,domain=0:0.9] plot (\x,{3.8317^2/(cos(\x*90/3.1416)^2)});
      \draw plot coordinates {
( 1.0472,17.5460)( 1.0001,17.2927)( 0.9529,17.0484)(
0.9058,16.8145)( 0.8587,16.5920)( 0.8116,16.3816)( 0.7645,16.1837)(
0.7173,15.9985)( 0.6702,15.8261)( 0.6231,15.6665)( 0.5760,15.5195)(
0.5288,15.3850)( 0.4817,15.2629)( 0.4346,15.1530)( 0.3875,15.0551)(
0.3403,14.9689)( 0.2932,14.8943)( 0.2461,14.8312)( 0.1990,14.7793)(
0.1518,14.7386)( 0.1047,14.7089)(0.05,14.688)(0,14.682) }; \path
(0.7645,16.1837) node [below=-1pt] {\tiny $\mu_1 D^2$};
\draw[densely dashed] plot coordinates {
( 1.0472,17.5460)( 1.0001,18.8398)( 0.9529,20.3180)( 0.9058,22.0180)( 0.8587,23.9871)( 0.8116,26.2863)( 0.7645,28.9953)( 0.7173,32.2201)
}; \path ( 0.9529,20.3180) node [left=-1pt] {\tiny $\mu_a D^2$};
    \end{tikzpicture}
  \end{center}
  \caption{Numerical plot of the smallest Neumann eigenvalue $\mu_1$ and the smallest Neumann eigenvalue $\mu_a$ with antisymmetric eigenfunction, for subequilateral triangles with aperture $\alpha$. The eigenvalues are normalized by  multiplying by the square of the diameter. Dotted lines show the bounds from Lemma \ref{boundsiso}, converging to the asymptotic value $j_{1,1}^2$.} \label{mustarfig}
\end{figure}

Take the equal sides of the isosceles triangle $T(\alpha)$ to have length $l=1$. Assume $\alpha<\pi/3$ so that $T(\alpha)$ is subequilateral with diameter $D=1$, and take $\beta=\pi/3$ so that $T(\beta)$ is equilateral. Let $w$ be the eigenfunction of the equilateral triangle $T(\beta)$ that is symmetric with respect to the $x$-axis (meaning $w$ is obtained from the eigenfunction $u_2$ in Section~\ref{equilateral} by first translating $E$ to shift its vertex $(1/2,\sqrt{3}/2)$ to the origin and then rotating by $\pi/2$ counterclockwise).

Recall the linear transformation $\tau$ from Section~\ref{isec2}, which maps $T(\alpha)$ onto $T(\beta)$. Note $w \circ \tau$ has mean value zero over $T(\alpha)$, and hence can be used as a trial function for the fundamental mode $\mu_1(\alpha)$. Condition (i) in Corollary~\ref{corcomp} is equivalent to
\[
  0.130 \simeq \frac{32\pi^2-243}{32\pi^2+243} = \frac{\int_E u_{2,x}^2 \, dA}{\int_E u_{2,y}^2 \, dA} < \tan^2(\alpha/2) ,
\]
where the integrals of $u_2$ were evaluated in
Section~\ref{equilateral}. Therefore by
Corollary~\ref{corcomp}(i),
\begin{equation} \label{mustarup}
  \mu_1(\alpha) < \mu_1(\pi/3)=\frac{16\pi^2}{9}
\end{equation}
if $\tan^2(\alpha/2) \gtrsim 0.130$. On the other hand, the upper bound from Lemma~\ref{boundsiso} gives \eqref{mustarup} whenever
\[
  \cos^2(\alpha/2) > \frac{9j_{1,1}^2}{16\pi^2}  ,
\]
which is equivalent to $\tan^2(\alpha/2) < (16\pi^2/9j_{1,1}^2)-1 \simeq 0.195$.
Thus \eqref{mustarup} holds for all $\alpha<\pi/3$.

Note that our proof of \eqref{mustarup} relies on transplanting
trial functions from both the equilateral triangle (for ``large''
$\alpha$, near $\pi/3$) and the sector (for ``small'' $\alpha$,
when we call on Lemma~\ref{boundsiso}).

Now change notation and take $\alpha=\pi/3$ and $\beta<\pi/3$. Consider the smallest eigenvalue of $T(\beta)$ that has an antisymmetric eigenfunction; call this eigenvalue $\mu_a(\beta)$ and its corresponding antisymmetric eigenfunction $v$. Note $v \circ \tau$ has mean value zero, and hence can be used as a trial function for the fundamental tone $\mu_1(\alpha)$ of the equilateral triangle. By  Corollary~\ref{corcomp}(ii) we see
\begin{equation} \label{eq:antisym}
  \mu_a(\beta) > \mu_1(\pi/3) = \frac{16\pi^2}{9}
\end{equation}
if
\[
  \frac{\int_{T(\beta)} v_y^2 \, dA}{\int_{T(\beta)} v_x^2 \, dA}
  > \tan^2(\pi/6)=\frac{1}{3} .
\]
Assume, on the other hand, that this last condition does not hold. Then $\int_{T(\beta)} v_x^2 \, dA \geq 3 \int_{T(\beta)} v_y^2 \, dA$. Write $h=\cos(\beta/2)$ for the width of $T(\beta)$ and $\gamma=\tan(\beta/2)$ for the slope of its upper side. Then
\[
\mu_a(\beta) = \frac{\int_{T(\beta)} |\nabla v|^2 \, dA}{\int_{T(\beta)} v^2 \, dA}
\geq \frac{4 \int_0^h \int_{-\gamma x}^{\gamma x} v_y^2 \, dy dx}{\int_0^h \int_{-\gamma x}^{\gamma x} v^2 \, dy dx} .
\]
Notice that for each fixed $x$, the function $y \mapsto v(x,y)$ has mean value zero, by the antisymmetry. Hence $y \mapsto v(x,y)$ is a valid trial function for the fundamental tone of the one dimensional Neumann problem on the interval $[-\gamma x, \gamma x]$. That fundamental tone equals $(\pi/2\gamma x)^2$, and so
\[
\frac{\int_{-\gamma x}^{\gamma x} v_y^2 \, dy}{\int_{-\gamma x}^{\gamma x} v^2 \, dy} \geq \Big( \frac{\pi}{2\gamma x} \Big)^{\! 2} .
\]
Since $x \leq h$, we conclude from above that
\[
\mu_a(\beta) \geq 4 \Big( \frac{\pi}{2\gamma h} \Big)^{\! 2} = \frac{\pi^2}{\sin^2(\beta/2)} .
\]
Then because $\beta<\pi/3$ we deduce $\mu_a(\beta) > 4\pi^2$, which is certainly greater than $16\pi^2/9$. Hence \eqref{eq:antisym} holds for all $\beta<\pi/3$.

We have shown that
\[
\mu_1(\alpha) < \frac{16\pi^2}{9} < \mu_a(\beta)
\]
whenever $\alpha,\beta < \pi/3$. The proof is complete.

\subsection*{Method of the Unknown Trial Function} Our proof above uses the
antisymmetric eigenfunction to construct antisymmetric trial
functions for the two ``endpoint'' situations: the equilateral
triangle and the narrow isosceles triangle (via the interval in
the $y$-direction, above). We know those two endpoint eigenvalues
exactly, and so we obtain the lower bound \eqref{eq:antisym} on
the antisymmetric eigenvalue.

We call this approach the ``Method of the Unknown Trial
Function'', because we do not know the antisymmetric eigenfunction
explicitly, and for a given aperture $\beta$ we do not even know
\textit{which} of the two endpoint situations will give the
lower bound \eqref{eq:antisym}.

The method will be used again in the proof of
Theorem~\ref{bluntantisymmetric}, using different endpoint cases.

\section{\bf Proof of Theorem~\ref{bluntantisymmetric}: antisymmetry of the fundamental mode for superequilateral triangles} \label{isec4}

We will show that for a superequilateral triangle, the fundamental tone $\mu_1$ is smaller than the smallest eigenvalue $\mu_s$ having a symmetric eigenfunction. See Figure~\ref{fig:supereq}. It follows that every fundamental mode of the triangle is antisymmetric.

\begin{figure}[t]
  \begin{center}
\begin{tikzpicture}[smooth,xscale=3,yscale=0.1]
  \draw[<->] (3.3,16) node [below] {\tiny $\beta$} -- (pi/3,16) -- (pi/3,62);
      \draw (pi/2,16) -- +(0,-0.5) node [below] {\tiny $\pi/2$};
      \draw (pi/3,16) -- +(0,-0.5) node [below] {\tiny $\pi/3$};
      \draw (pi,16) -- +(0,-0.5) node [below] {\tiny $\pi$};
      \draw[loosely dotted] (pi/3,23.18) -- (pi,23.18);
      \draw[loosely dotted] (pi/3,58.727) -| (pi,16);
      \draw[loosely dotted] (pi/2,16) |- (pi/3,4*pi^2);
      \draw (pi/3,4*pi^2) -- +(-0.02,0) node [left] {\tiny $4\pi^2$};
      \draw (pi/3,58.727) -- +(-0.02,0) node [left] {\tiny $4j_{1,1}^2$};
      \draw (pi/3,23.18) -- +(-0.02,0) node [left] {\tiny $4j_{0,1}^2$};
      \draw[densely dotted,domain=2:pi] plot (\x,{23.18*sin(deg(\x/2))^2});
      \draw (pi/3,17.546) -- +(-0.02,0) node [left] {\tiny $16\pi^2/9$};
      \draw[densely dashed] plot coordinates {
(1.0472,17.5460)( 1.0996,19.4768)( 1.1519,21.5119)( 1.2043,23.6435)( 1.2566,25.8595)( 1.3090,28.1419)( 1.3614,30.4660)( 1.4137,32.8003)( 1.4661,35.1073)( 1.5184,37.3464)( 1.5708,39.4785)( 1.7237,44.8402)( 1.8766,48.8091)( 2.0296,51.6706)( 2.1825,53.7772)( 2.3354,55.3675)( 2.4883,56.5804)( 2.6412,57.4933)( 2.7942,58.1511)( 2.9471,58.5981)
} -- (pi,58.7279);
\draw plot coordinates {
( 1.0472,17.5460)( 1.0996,17.7879)( 1.1519,18.0243)( 1.2043,18.2556)( 1.2566,18.4818)( 1.3090,18.7031)( 1.3614,18.9196)( 1.4137,19.1314)( 1.4661,19.3386)( 1.5184,19.5412)( 1.5708,19.7392)
( 1.7237,20.2918)( 1.8766,20.8052)( 2.0296,21.2783)( 2.1825,21.7085)( 2.3354,22.0927)( 2.4883,22.4262)( 2.6412,22.7038)( 2.7942,22.9191)( 2.9471,23.0656)( 3.1000,23.1767)} -- (pi,23.18);
\draw (2.0296,21.2783) node[below=-1pt] {\tiny $\mu_1 D^2$};
\draw (2.0296,51.6706) node[below right=-3pt] {\tiny $\mu_s D^2$};
    \end{tikzpicture}
  \end{center}
  \caption{Numerical plot of the smallest Neumann eigenvalue $\mu_1$ and the smallest Neumann eigenvalue $\mu_s$ having a symmetric eigenfunction, for superequilateral triangles with aperture $\beta$. The eigenvalues are normalized by  multiplying by the square of the diameter. The dotted line shows the bound from Proposition~\ref{1D}, converging to the asymptotic value $4j_{0,1}^2$.} \label{fig:supereq}
\end{figure}
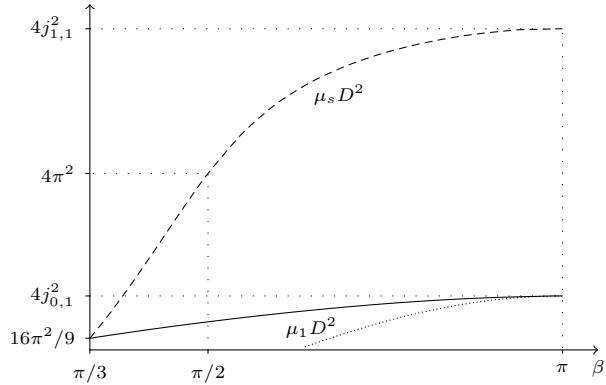 

Our proof will rely on Theorem~\ref{th:tld2}, but there is no
danger of logical circularity because
Theorem~\ref{bluntantisymmetric} plays no role in proving Theorem~\ref{th:tld2}; it is used only in the proof of Proposition~\ref{1D}.

Let us continue to assume that the equal sides of the isosceles triangle
$T(\beta)$ have length $l=1$. Assume $\pi/3 < \beta < \pi$, so that the triangle is superequilateral with diameter $D=2\sin(\beta/2)$.

Let $\mu_s(\beta)$ denote the smallest positive eigenvalue of $T(\beta)$
that has a symmetric eigenfunction $w$. (The nodal domains for this symmetric eigenfunction are sketched in Figure~\ref{fig:symmmode}, based on numerical work.) Cut $T(\beta)$ in half along its line of symmetry, and call the upper half right triangle $U(\beta)$. Then $w$ has mean value zero over each half of $T(\beta)$, and thus is a valid trial function for $\mu_1 \big( U(\beta) \big)$. Hence
\[
\mu_s(\beta) = R_{T(\beta)}[w] = R_{U(\beta)}[w] \geq \mu_1 \big( U(\beta) \big) > j_{1,1}^2
\]
by Theorem \ref{th:tld2}, since $U(\beta)$ has diameter $1$. Further, Cheng's bound \eqref{Cheng} gives an upper bound
\[
  \mu_1(\beta)<\frac{4j_{0,1}^2}{D^2} = \frac{j_{0,1}^2}{\sin^2(\beta/2)}.
\]
Combining the two estimates, we deduce that if
\[
\sin^2(\beta/2) \geq \frac{j_{0,1}^2}{j_{1,1}^2} \simeq 0.39
\]
then $\mu_1(\beta)<\mu_s(\beta)$. In particular, for obtuse isosceles triangles ($\pi/2 \leq \beta < \pi$) we deduce the fundamental mode must be antisymmetric. 

\begin{figure}[t]
    \hspace{\fill}
    \subfloat[\text{acute triangle}]{
    \begin{tikzpicture}[scale=1.5]
      \path (60:2) -- (-60:2);
      \draw[clip] (0,0) -- (37:2) -- (-37:2) -- cycle;
      \draw[dashed] (0,0) circle (1.1);
      \draw (0,0) -- (2,0);
    \end{tikzpicture}
    }
    \hspace{\fill}
    \subfloat[right triangle]{
    \begin{tikzpicture}[scale=1.5]
      \path (60:2) -- (-60:2);
      \draw[clip] (0,0) -- (45:2) -- (-45:2) -- cycle;
      \draw (0,0) -- (2,0);
      \draw[dashed] (45:1) -- ({sqrt(2)},0) -- (-45:1);
    \end{tikzpicture}
    }
    \hspace{\fill}
    \subfloat[obtuse triangle]{
    \begin{tikzpicture}[scale=1.5]
      \path (-0.5,0) -- (1.5,0);
      \draw[clip] (0,0) -- (60:2) -- (-60:2) -- cycle;
      \draw (0,0) -- (2,0);
      \draw[dashed] (60:2) circle (1.15);
      \draw[dashed] (-60:2) circle (1.15);
    \end{tikzpicture}
    }
    \hspace{\fill}
  \caption{Nodal curves (dashed) for the lowest symmetric modes of isosceles triangles. The symmetric mode satisfies a Neumann condition on each solid line, and a Dirichlet condition on each dashed curve.}
\label{fig:symmmode}
\end{figure}
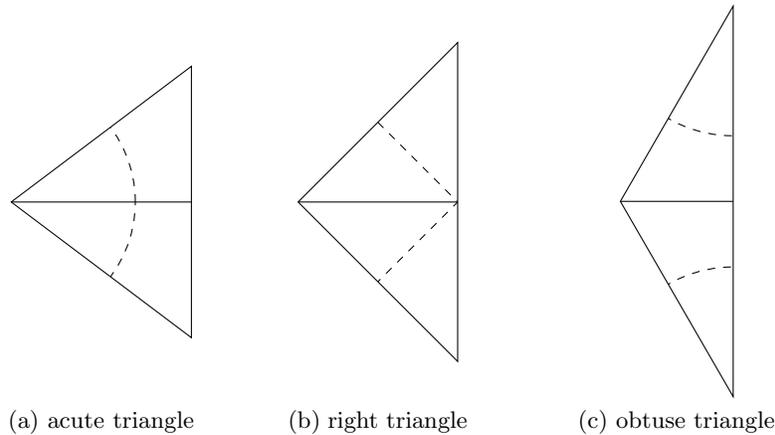

Suppose from now on that $\pi/3 < \beta < \pi/2$. We have
\begin{equation} \label{eq:mu1upper}
  \mu_1(\beta) <
  \frac{16\pi^2}{3S^2}=\frac{16\pi^2}{12\sin^2(\beta/2)+6},
\end{equation}
where $S^2=l_1^2+l_2^2+l_3^2$ is the sum of squares of side lengths of the triangle,
by Theorem~3.1 in our companion paper \cite{LS09a}. 

To show that $\mu_s(\beta)$ exceeds this last value, we employ our
Method of the Unknown Trial Function, this time with the
``unknown'' function being the symmetric eigenfunction $w$, and
with certain equilateral and right triangles providing the
endpoint cases.

Let
\[
\kappa = \frac{\int_{T(\beta)} w_y^2 \, dA}{\int_{T(\beta)} (w_x^2+w_y^2) \, dA} .
\]
The proof will divide into two cases, depending on whether $\kappa < 1/2$ or $\kappa \geq 1/2$.

Take $\alpha=\pi/3$ so that $T(\alpha)$ is equilateral. Recall $\mu_s(\alpha)=16\pi^2/9$ from Section~\ref{equilateral}. Note $w \circ \tau$ has mean value zero over $T(\alpha)$, and is symmetric, and so can be used as a trial function for $\mu_s(\alpha)$. Let $G(\beta)= \big( 4\sin^2(\beta/2) -1 \big)/3$. Since $\alpha=\pi/3 < \beta$, Lemma~\ref{lemcomp}(i) implies that if
\[
\kappa < \frac{1}{4} + \frac{(1/4)(3/4)}{\sin^2(\beta/2)-(1/4)} \frac{4\sin^2(\beta/2) - 1}{3} = \frac{1}{2}
\]
then
\[
\frac{16\pi^2}{9} = \mu_s(\pi/3) < \frac{4\sin^2(\beta/2) + 2}{3} \mu_s(\beta) ,
\]
which can be rewritten as
\begin{equation} \label{eq:rewrite}
\frac{16\pi^2}{12\sin^2(\beta/2)+6} < \mu_s(\beta) .
\end{equation}
Thus if $\kappa < 1/2$ then $\mu_1(\beta)<\mu_s(\beta)$ by \eqref{eq:mu1upper} and \eqref{eq:rewrite}, and so the fundamental mode of $T(\beta)$ must be antisymmetric.

Next take $\alpha=\pi/2$, so that $T(\alpha)$ is a right isosceles
triangle. Its smallest positive eigenvalue with symmetric
eigenfunction is $\mu_s(\pi/2)=2\pi^2$ (with eigenfunction
$\cos(\sqrt{2}\pi x)+\cos(\sqrt{2}\pi y)$, which gives the nodal domains in Figure~\ref{fig:symmmode}(b)). 

Note $w \circ \tau$ has mean value zero over
$T(\alpha)$, and is symmetric, and so can be used as a trial
function for $\mu_s(\alpha)$. Let $G(\beta)= \big(
6\sin^2(\beta/2) -1 \big)/4$. Notice $G(\beta)>0$ because
$\beta>\pi/3$. Putting $\alpha=\pi/2$ and $\beta<\pi/2$ into
Lemma~\ref{lemcomp}(ii), we see that if $\kappa \geq 1/2$ then
\[
2\pi^2 = \mu_s(\pi/2) < \frac{6\sin^2(\beta/2) + 3}{4} \mu_s(\beta) .
\]
This last expression is equivalent to \eqref{eq:rewrite}, completing the proof when $\kappa \geq 1/2$.

\section{\bf Proof of Theorem \ref{th:tld2}: the lower bound on $\mu_1 D^2$} \label{sec:bisec}

First we reduce to subequilateral triangles.
\begin{proposition} \label{pr:sharp}
Given any triangle, there exists a subequilateral or equilateral triangle of the
same diameter whose fundamental tone is less than or equal
to that of the original triangle. The inequality is strict,
unless the original triangle is itself subequilateral or equilateral.
\end{proposition}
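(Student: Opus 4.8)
The plan is to obtain $T_1$ from $T$ by stretching $T$ in the direction perpendicular to a longest side, by exactly the amount needed to make a second side attain length $D$. The resulting triangle will be isosceles of diameter $D$ with aperture $\le\pi/3$, and the stretch factor will be $\ge 1$, so the fundamental tone cannot increase.

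Concretely, I would let $OB$ be a longest side of $T$, so $|OB|=D$, and let $A$ be the opposite vertex; after interchanging the labels $O$ and $B$ if necessary, assume $|OA|\le|BA|$. Choose coordinates with $O=(0,0)$, $B=(D,0)$, and $A=(p,q)$, $q>0$. Since $OB$ is a longest side, the angle of $T$ opposite it is the largest, so the angles at $O$ and $B$ are each at most $\pi/2$; hence $0\le p\le D$, and the normalization $|OA|\le|BA|$ then forces $p\le D/2$. Again because $OB$ is longest we have $|BA|\le D$, so there is $b\ge1$ with $b^2q^2=D^2-(D-p)^2$; put $M(x,y)=(x,by)$ and $T_1=M(T)$. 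The triangle $T_1$ has vertices $O$, $B$, $A_1=(p,bq)$, and one computes $|BA_1|^2=(D-p)^2+b^2q^2=D^2$ and $|OA_1|^2=p^2+b^2q^2=2Dp\le D^2$. Thus the three side lengths of $T_1$ are $D$, $D$, and $\sqrt{2Dp}\le D$, so $T_1$ has diameter $D$ and is isosceles with equal sides $OB$ and $BA_1$ of length $D$ and base $OA_1$ of length $\le D$; hence its aperture at $B$ is at most $\pi/3$, i.e.\ $T_1$ is subequilateral, and equilateral exactly when $p=D/2$.

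To compare fundamental tones, let $v$ be a fundamental mode of $T$ and use $\tilde v=v\circ M^{-1}$ as a trial function for $T_1$. Since $M$ has constant Jacobian, $\tilde v$ has mean value zero over $T_1$, and changing variables in the Rayleigh quotient gives
\[
\mu_1(T_1)\le R_{T_1}[\tilde v]=\frac{\int_T\big(v_x^2+b^{-2}v_y^2\big)\,dA}{\int_T v^2\,dA}\le\frac{\int_T\big(v_x^2+v_y^2\big)\,dA}{\int_T v^2\,dA}=\mu_1(T),
\]
using $b\ge1$ in the last step. For strictness, equality there would force $b=1$ or $v_y\equiv0$ on $T$. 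If $v_y\equiv0$ then $v$ depends on $x$ alone, and the Neumann condition on whichever of the sides $OA$, $BA$ is not vertical (they cannot both be vertical) forces $v'(x)=0$ on a nondegenerate interval, impossible for the nonconstant mode $v$. And $b=1$ holds exactly when $|BA|=D$, in which case $T$ itself has sides $OB$ and $BA$ of length $D$ and third side $\le D$, so $T$ is already subequilateral or equilateral. Hence the inequality is strict whenever $T$ is not subequilateral or equilateral, while if it is one simply takes $T_1=T$.

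The only step needing any thought is the choice of transformation. A general linear map between triangles introduces a cross term $v_xv_y$ into $|\nabla\tilde v|^2$ that cannot be controlled pointwise — this is why the isosceles-to-isosceles Lemma~\ref{lemcomp} must invoke a symmetry of the mode — whereas a pure stretch perpendicular to a longest side kills that cross term; the factor $b\ge1$ is precisely the content of ``$OB$ is a longest side'', and the amount of stretch is pinned down by the twin requirements that $T_1$ retain diameter $D$ and have aperture $\le\pi/3$. Everything else is elementary bookkeeping.
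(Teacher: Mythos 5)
Your proof is correct and is essentially the paper's own argument: stretch perpendicular to the longest side until a second side reaches length $D$ (your coordinate computation just makes explicit the paper's assertion that the result is isosceles of the same diameter with third side $\le D$), observe the stretch factor is $\ge 1$ so the Rayleigh quotient of the transplanted mode does not increase, and rule out equality by excluding $v_y\equiv 0$. The only slight difference is in that last step: the paper uses the Neumann condition on \emph{both} non-horizontal sides to get $v'\equiv 0$ on the whole $x$-range, hence $v$ constant, whereas your version ($v'=0$ on a nondegenerate interval is ``impossible for the nonconstant mode'') implicitly needs one more line, e.g.\ that $v$ then solves $-v''=\mu_1 v$ so $v'=0$ on an interval forces $v\equiv 0$.
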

\begin{proof}
Linearly stretch the given triangle in the direction perpendicular
to its longest side, until one of the other sides has the same
length as the longest side. This new triangle is isosceles by
construction, with the same diameter (\emph{i.e.}, longest side
length) as the original triangle. The new triangle is
subequilateral or equilateral, since its third side is at most as
long as the two equal sides.

We will show that the stretching procedure reduces
the fundamental tone, by assuming the longest side of the triangle lies along
the $x$-axis and applying the following general argument.

Let $\Omega$ be a planar Lipschitz domain. For each $t > 1$, let
$\Omega_t = \{ (x,ty) : (x,y) \in \Omega \}$ be the domain obtained
by stretching $\Omega$ by the factor $t$ in the $y$ direction. Given
any trial function $u \in H^1(\Omega)$ we have the trial function
$v(x,y)=u(x,y/t)$ in $H^1(\Omega_t)$, with Rayleigh quotient
\begin{align}
R[v] & = \frac{\int_{\Omega_t} \big( u_x(x,y/t)^2+
u_y(x,y/t)^2/t^2 \big) \, dA}{\int_{\Omega_t} u(x,y/t)^2 \, dA} \notag \\
& = \frac{\int_\Omega \big( u_x(x,y)^2+u_y(x,y)^2/t^2 \big) \,
dA}{\int_\Omega u(x,y)^2 \, dA} \notag \\
& \leq R[u] \label{eq:stretch}
\end{align}
since $t > 1$. In addition, if $u$ has mean value zero over $\Omega$, then so does $v$ over $\Omega_t$. Hence taking $u$ to be a fundamental mode $u_1$ for $\Omega$ implies that $\mu_1(\Omega_t) \leq \mu_1(\Omega)$, by the Rayleigh Principle.

The inequality of Rayleigh quotients in \eqref{eq:stretch} is strict
unless $u_y \equiv 0$. Thus the only possibility for the fundamental tone to
remain unchanged by the stretching is for the fundamental mode $u_1$ to
depend only on $x$. That cannot occur for a triangle, since
on sides of the triangle that are not parallel to the $x$-axis the Neumann boundary condition would force $\partial u_1 /\partial x \equiv 0$, so that $u_1 \equiv \text{(const.)}$ on the whole triangle, contradicting that $u_1$ is orthogonal to the constant mode.
Hence for triangles, the stretching procedure strictly reduces
$\mu_1$, when $t>1$.
\end{proof}

The point of Proposition~\ref{pr:sharp} is that when proving
Theorem~\ref{th:tld2}, we need only consider subequilateral and equilateral
triangles.

Recall the isosceles triangle $T(\alpha)$ with aperture $\alpha$
and side length $l$, and fundamental tone $\mu_1(\alpha)$. Assume
$0 < \alpha \leq \pi/3$, so that the triangle is subequilateral or
equilateral, with diameter $D=l$. Our task is to prove
\begin{equation} \label{eq:isos}
\mu_1(\alpha) > \frac{j_{1,1}^2}{D^2} , \qquad \alpha \in (0,\pi/3] .
\end{equation}
Equality holds asymptotically for degenerate acute isosceles triangles, since $\lim_{\alpha \to 0} \mu_1(\alpha) = j_{1,1}^2/D^2$ by Lemma~\ref{boundsiso}.

Numerical work suggests that $\mu_1(\alpha)$ is strictly increasing on
$(0,\pi/3]$, as shown in Figure~\ref{mustarfig}, but we have not been able to prove such monotonicity. Instead we bisect and stretch, as follows.

Cutting $T(\alpha)$ along its line of symmetry yields two right
triangles. Let $U(\alpha)$ be one of them.

The fundamental mode $v$ of $T(\alpha)$ is symmetric in the subequilateral case $\alpha \in (0,\pi/3)$, by Theorem~\ref{sharpsymmetric}, and it can be chosen to be symmetric in the equilateral case $\alpha=\pi/3$, by Section~\ref{equilateral}. Since $v$ has mean value zero over
$T(\alpha)$, it also has mean value zero over $U(\alpha)$. It follows from the Rayleigh Principle and symmetry that
\[
\mu_1 \big( U(\alpha) \big) \leq R_{U(\alpha)}[v] = R_{T(\alpha)}[v] = \mu_1(\alpha) .
\]

Now linearly stretch the right triangle $U(\alpha)$ in the direction
perpendicular to its longest side. After some amount of stretching, we
obtain a subequilateral triangle $T(\alpha_1)$ with the same diameter and with aperture determined by $\cos \alpha_1 = \cos^2 (\alpha/2)$, as some simple trigonometry reveals (see Figure~\ref{trigfig}).
Hence $\sin(\alpha_1/2) = \sin(\alpha/2)/\sqrt{2}$.

The stretching strictly reduces the fundamental tone, by Proposition~\ref{pr:sharp} and its proof, and so
\[
\mu_1(\alpha_1) < \mu_1 \big( U(\alpha) \big) \leq \mu_1(\alpha) .
\]

\begin{figure}[t]
  \begin{center}
\beginpgfgraphicnamed{triangles5_pic4}
\begin{tikzpicture}[scale=1]
      \pgfmathsetmacro{\x}{10}
      \pgfmathsetmacro{\y}{40}
      \coordinate (a) at (\x,0);
      \draw[dotted] (a) arc (0:\y:\x) coordinate (b);
      \fill[lightgray] (0,0) -- ($(a)!0.5!(b)$) coordinate (c)  {node[black,pos=0.8,below right=3pt] {$U(\alpha)$}} {node [below,pos=0.19,black] {$\alpha$}} {node [below,pos=0.36,black] {$\alpha_1$}} -- (a) -- cycle;
      \draw (0,0) -- (a) node[below,pos=0.5] {$D$}-- (b) -- (0,0) {node [above,pos=0.5] {$D$}} node [below right=3pt,pos=0.2] {$T(\alpha)$};
      \draw[loosely dashed] ($(0,0)!(c)!(a)$) coordinate (d) -- (c);
      \draw[scale=0.15] (\x,0) arc (0:\y:\x);
      \clip (0,0) -- (a) arc (0:\y:\x) -- cycle;
      \node (cir) at (0,0) [circle through=(a)] {};
      \draw[clip] (0,0) -- (intersection 2 of cir and c--d) coordinate (e) node [below right=2pt,pos=0.8] {$T(\alpha_1)$} -- (a);
      \draw[scale=0.3] (\x,0) arc (0:\y:\x);
      \draw[loosely dashed] (c) -- (e);
    \end{tikzpicture}
\endpgfgraphicnamed
  \end{center}
  \caption{Illustration of the bisection and stretching algorithm in Section~\ref{sec:bisec}, for the subequilateral triangle $T(\alpha)$ (which has been rotated here for clarity). } \label{trigfig}
\end{figure}
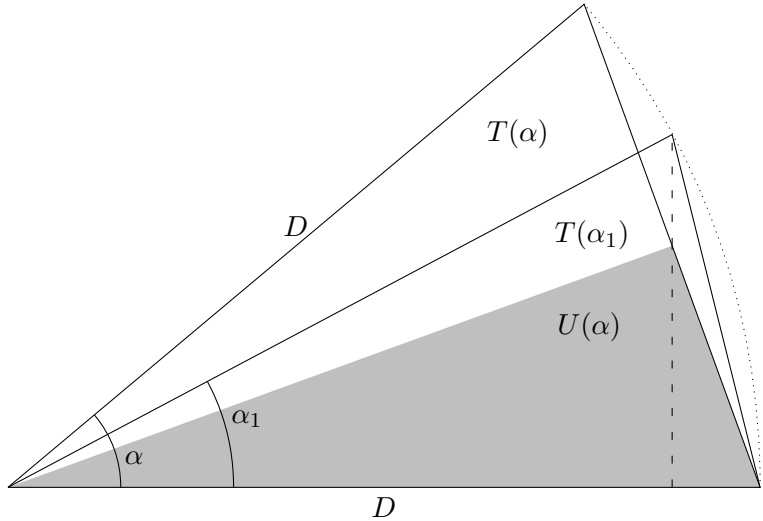

Continuing in this fashion, we deduce
\[
\mu_1(\alpha_n) < \cdots < \mu_1(\alpha_1) < \mu_1(\alpha)
\]
where the apertures $\alpha_n < \cdots < \alpha_1 < \alpha$ satisfy
\[
\sin(\alpha_n/2) = \frac{\sin(\alpha_{n-1}/2)}{\sqrt{2}} .
\]
Thus $\lim_{n \to \infty} \alpha_n = 0$, so that $\lim_{n \to
\infty} \mu_1(\alpha_n) = j_{1,1}^2/D^2$  by
Lemma~\ref{boundsiso}. Hence $(j_{1,1}^2/D^2) < \mu_1(\alpha)$,
which proves \eqref{eq:isos}.

\section{\bf Proof of Proposition \ref{1D}}
By rescaling and rotating we can suppose the superequilateral triangle is $T(\alpha)$ for some $\pi/3<\alpha<\pi$.

The upper bound in the Proposition is just Cheng's inequality \eqref{Cheng}, which was proved directly for superequilateral triangles in Lemma~\ref{chengsupereq}.

For the lower bound, first recall from Theorem~\ref{bluntantisymmetric} that
the fundamental mode $v$ of the superequilateral triangle $T(\alpha)$ is antisymmetric, and hence vanishes along the $x$-axis. Write $U(\alpha)$ for the upper half of $T(\alpha)$, so that $v$ satisfies a Dirichlet condition on the bottom edge of $U(\alpha)$. Let $z_+= \big( l\cos(\alpha/2),l\sin(\alpha/2) \big)$ be the upper vertex of $U(\alpha)$.

Consider the sector with center at $z_+$ and sides of length $l$ running from $z_+$ to the origin and from $z_+$ to $z_+ - (0,l)$, and with its arc running from the origin to $z_+ - (0,l)$. We are interested in the fundamental tone of the Laplacian on this sector, when Dirichlet boundary conditions are imposed on the arc and no conditions are imposed on the two sides. Defining $w$ to equal $v$ on $U(\alpha)$ and zero outside it, we see that $w$ is a Sobolev function in the sector and equals zero on the arc. Hence $w$ is a valid trial function for the fundamental tone of the sector. That fundamental tone equals $(j_{0,1}/l)^2$ (with fundamental mode $J_0(j_{0,1}|z-z_+|/l)$), and so
\[
\Big( \frac{j_{0,1}}{l} \Big)^{\! 2} \leq R[w] = R_{U(\alpha)}[v] = R_{T(\alpha)}[v] = \mu_1(\alpha) .
\]
Since $T(\alpha)$ has diameter $D=2l\sin(\alpha/2)$, the Proposition follows immediately.

\section*{Acknowledgments} We are grateful to Mark Ashbaugh and G\'{e}rard Philippin for guiding us to relevant parts of the literature.

\end{document}